  \def\?[#1]{\textbf{[#1]}\marginpar{\Large{\textbf{??}}}}%
\numberwithin{equation}{section}
\def\arXiv#1{\href{http://arxiv.org/abs/#1}{arXiv:#1}}
\newcommand{\be}{\begin{equation}}
\newcommand{\ee}{\end{equation}}
\def\bbbone{{\mathchoice {1\mskip-4mu {\rm{l}}} {1\mskip-4mu {\rm{l}}}
{ 1\mskip-4.5mu {\rm{l}}} { 1\mskip-5mu {\rm{l}}}}}
\newcommand{\CI}{{\mathcal C}^\infty }
\newcommand{\CIc}{{\mathcal C}^\infty_{\rm{c}} }
\newcommand{\NN}{{\mathbb N}}
\newcommand{\RR}{{\mathbb R}}
\newcommand{\TT}{{\mathbb T}}
\newcommand{\ZZ}{{\mathbb Z}}
\newcommand{\supp}{\operatorname{supp}}
\theoremstyle{plain}
\newtheorem{thm}{Theorem}
\newtheorem{prop}{Proposition}[section]
\newtheorem{lem}[prop]{Lemma}
\theoremstyle{definition}
\numberwithin{equation}{section}
\def\bbbone{{\mathchoice {1\mskip-4mu {\rm{l}}} {1\mskip-4mu {\rm{l}}}
{ 1\mskip-4.5mu {\rm{l}}} { 1\mskip-5mu {\rm{l}}}}}
\def\squarebox#1{\hbox to #1{\hfill\vbox to #1{\vfill}}}
\title[Rough controls for Schr\"odinger operators on 2-tori]
{Rough controls for Schr\"odinger operators on 2-tori}
\author[N. Burq]{Nicolas Burq}
\address{Universit{\'e} Paris Sud, Universit\'e Paris-Saclay, CNRS
Math{\'e}matiques,
B{\^a}t 307, 91405
Orsay Cedex, France}
\email{Nicolas.burq@u-psud.fr}
\author[M. Zworski]{Maciej Zworski}
\address{Mathematics Department, University of California, Berkeley, \
CA 94720, USA}
\email{zworski@math.berkeley.edu}
\def\11{{\rm 1~\hspace{-1.4ex}l} }
\def\R{\mathbb R}
\def\Z{\mathbb Z}
\def\N{\mathbb N}
\def\T{\mathbb T}
\begin{document}

\begin{abstract}
The purpose of this note is to use the results and methods of 
\cite{BBZ} and \cite{BZ4} to obtain control and observability by rough functions and sets on 2-tori, $ \TT^2 = \RR^2/\ZZ \oplus \gamma \ZZ $. 
We show that for a non-trivial $ W \in L^\infty ( \TT^2 ) $, 
solutions to the  Schr\"odinger equation, $ ( i \partial_t + \Delta ) u = 0 $, satisfy
$ \| u |_{ t = 0 }
\|_{ L^2 ( \T^2 ) } \leq K_T \| W u \|_{ L^2 ( [0,T] \times \TT^2 )}  $.
%where the right hand side is finite thanks to Theorem \ref{th.BBZ}.
In particular, any Lebesgue measurable set of positive measure can be used for observability. This leads to controllability with 
localization functions in $ L^2 ( \TT^2 ) $ (or $ L^4$) and controls in 
$ L^4 ( [ 0 , T ] \times \TT^2 ) $ (or $ L^2 $).
 For continuous $ W $ this follows from the results of
Haraux \cite{Ha} and Jaffard \cite{Ja}, while 
for $ \TT^2 = \RR^2/ ( 2 \pi \ZZ )^2 $ and 
$ T > \pi $ this can be deduced from the results of Jakobson \cite{Jak}.

\end{abstract}   

\maketitle   

\section{Introduction}   
\label{in}
The purpose of this paper is to investigate the general question of control theory with localized control functions. When the localization is performed by a continuous function, the question is completely settled for wave equations~\cite{BLR,BG} and well understood for Schr\"odinger equations on tori~\cite{Ha, Ja, Ko, BZ4, AM}. 

In this paper we localize only to 
sets of {\em positive measure} or more generally use control functions in 
$ L^4 $. The understanding is then much poorer and only partial results are available even for the simpler case of wave equations~\cite{Bu16,Bu17}. Using the work with Bourgain \cite{BBZ} and \cite{BZ4} we completely settle  the question for Schr\"odinger equation on the two dimensional torus taking advantage, as in previous papers, of the
particular simplicity of the dynamical structure.

To state the control result consider
\begin{equation}
\label{eq:con}
\begin{gathered}  \T^2 : = \RR^2 /  \ZZ \times \gamma \ZZ \,, \ \ \gamma \in \RR \setminus \{
 0 \}\,,    \ \  a \in L^2 ( \T^2 ) , \\
 (i \partial_t + \Delta  ) u ( t, z) = a(z) 1_{(0,T)}f \,, \ \ \ 
u ( 0 , z ) =u_0 ( z ) , \end{gathered}
\end{equation}
where $a $ is a localisation function and $f$ a control. 
From \cite[Proposition 2.2]{BBZ} (see Theorem \ref{th.BBZ} below) we know that for $f \in L^{4}( \T^2 ; L^2(0,T))$ (so that  $af \in L^{4/3}( \T^2 ; L^2(0,T))$), and any $u_0 \in L^2( \T^2)$, there exists a unique solution 
$$ u \in C^0 ([0,T]; L^2( \T^2) ) \cap L^4 ( \T^2; L^2( 0,T)).$$ 

A classical question of control is to fix $ a $ and ask for which $u_0 \in L^2$ does there exist a control $f$ such that the solution of~\eqref{eq:con} satisfies $u |_{t>T} =0$? We show that on $\mathbb{T}^2$ it is always the case as soon as $a \in L^2$ is non-trivial:

\begin{thm}
\label{th.2}
Let $a \in L^2( \T^2), \| a \|_{L^2} >0$ and $ T > 0 $. 
Then for any $u_0\in L^2( \T^2)$ there exists $f \in L^4( \T^2 ; L^2(0,T))$ such that the solution $u$ of~\eqref{eq:con} satisfies $u |_{t=T} =0$.

If in addition $ a \in L^4 ( \T^2 ) $ then the same statement holds with 
$ f \in L^2 ( ( 0 , T ) \times \TT^2 ) $.
\end{thm}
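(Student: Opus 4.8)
The plan is to deduce \thmref{th.2} from the observability estimate for non-trivial bounded weights announced in the abstract (whose proof, the real substance of the paper, rests on the methods of \cite{BBZ,BZ4}), combined with the Hilbert Uniqueness Method and the Strichartz estimate of \thmref{th.BBZ}. The only twist beyond a textbook HUM argument is that $a$ need not be bounded below anywhere, so I would first pass to a set where it is. Since $a\in L^2(\T^2)$ is non-trivial there are $\delta>0$ and a measurable $E\subset\T^2$ with $0<|E|<\infty$ and $|a|\ge\delta$ a.e.\ on $E$ (e.g.\ $E=\{|a|\ge1/n\}$ for suitable $n$). The weight $W:=\bbbone_E$ is non-trivial and lies in $L^\infty(\T^2)$, so the observability estimate yields $K=K_T$ with
\begin{equation}\label{eq:obs-E}
\|\psi_0\|_{L^2(\T^2)}^2\ \le\ K^2\int_0^T\|\bbbone_E\,e^{it\Delta}\psi_0\|_{L^2(\T^2)}^2\,dt,\qquad \psi_0\in L^2(\T^2).
\end{equation}

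Next, for fixed $u_0\in L^2(\T^2)$ I would run HUM with the weight $\bbbone_E$: minimise the functional $J(\psi_0)=\tfrac12\int_0^T\|\bbbone_E e^{it\Delta}\psi_0\|_{L^2(\T^2)}^2\,dt+\Re\langle u_0,\psi_0\rangle_{L^2(\T^2)}$, which is continuous, strictly convex, and --- \emph{precisely} by \eqref{eq:obs-E} --- coercive, hence has a unique minimiser $\hat\psi_0$ with $\|\hat\psi_0\|_{L^2}\lesssim\|u_0\|_{L^2}$ (from $J(\hat\psi_0)\le J(0)=0$). Using $\bbbone_E^2=\bbbone_E$, its Euler--Lagrange equation is $\int_0^T e^{-it\Delta}\big(\bbbone_E\,e^{it\Delta}\hat\psi_0\big)\,dt=-u_0$, which by Duhamel's formula says exactly that, with forcing $G:=i\,\bbbone_E\,e^{it\Delta}\hat\psi_0$ (the phase being pinned down by Duhamel), the solution of $(i\partial_t+\Delta)u=\bbbone_{(0,T)}G$, $u(0)=u_0$ satisfies $u|_{t=T}=0$ and hence $u|_{t>T}=0$. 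Note $G(t,\cdot)$ is supported in $E$ and $|G(t,z)|\le|(e^{it\Delta}\hat\psi_0)(z)|$ pointwise.

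I would then set $f:=G/a$ on $(0,T)\times E$ and $f:=0$ off it; this is legitimate since $|a|\ge\delta$ on $E$, and as $G$ vanishes off $E$ we get $af=G$, so $f$ is a control for \eqref{eq:con} steering $u_0$ to $0$ at time $T$. The space claims follow from the pointwise bound $|f|\le\delta^{-1}|e^{it\Delta}\hat\psi_0|$ together with Strichartz: by \thmref{th.BBZ}, $e^{it\Delta}\hat\psi_0\in L^4(\T^2;L^2(0,T))$ with norm $\lesssim\|\hat\psi_0\|_{L^2}\lesssim\|u_0\|_{L^2}$, so $af=G$ lies in $L^{4/3}(\T^2;L^2(0,T))$ by Hölder in $z$ (using $|E|<\infty$) --- whence \thmref{th.BBZ} genuinely delivers $u\in C^0([0,T];L^2)\cap L^4(\T^2;L^2(0,T))$ and justifies the Duhamel computation above --- and
\[
\|f\|_{L^4(\T^2;L^2(0,T))}\ \le\ \delta^{-1}\,\|e^{it\Delta}\hat\psi_0\|_{L^4(\T^2;L^2(0,T))}\ \lesssim\ \delta^{-1}\|u_0\|_{L^2},
\]
which is the first assertion. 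If in addition $a\in L^4(\T^2)$, then $af\in L^{4/3}(\T^2;L^2(0,T))$ for \emph{every} $f\in L^2((0,T)\times\T^2)$, so \eqref{eq:con} is well posed in that class, and the same $f$ obeys $\|f\|_{L^2((0,T)\times\T^2)}\le\delta^{-1}\sqrt{T}\,\|\hat\psi_0\|_{L^2}\lesssim\delta^{-1}\|u_0\|_{L^2}$ by unitarity of $e^{it\Delta}$ on $L^2$, giving the second assertion.

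Apart from \eqref{eq:obs-E} itself --- where the dynamical simplicity of $\T^2$ and the tools of \cite{BBZ,BZ4} really enter --- the delicate point is \emph{why} a bare duality argument does not suffice: showing directly that $f\mapsto u|_{t=T}$ is onto $L^2(\T^2)$ would require a \emph{lower} bound for an $L^{4/3}(\T^2;L^2(0,T))$-type norm of $a\,e^{it\Delta}\psi_0$, whereas \eqref{eq:obs-E} only controls the $L^2$-norm from below, and on the finite-measure set $E$ the $L^{4/3}$-norm can be far smaller. HUM avoids this by producing the control explicitly as $G/a$ with $G=\bbbone_E\times(\text{free evolution})$, so the $L^{4/3}$-norm enters only as a harmless \emph{upper} bound in the appeal to \thmref{th.BBZ}, while the needed regularity of $f$ is supplied gratis by the Strichartz estimate.
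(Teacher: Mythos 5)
Your argument is correct, and it reaches the conclusion by the same overall architecture as the paper --- the observability estimate of \thmref{th.1} plus HUM plus the Strichartz bound of \thmref{th.BBZ} --- but the implementation is genuinely different. You truncate to $E=\{|a|\ge\delta\}$, apply \thmref{th.1} with the indicator weight $W=\bbbone_E\in L^\infty\subset L^4$, minimize the HUM functional directly, and divide the resulting control $G=i\,\bbbone_E e^{it\Delta}\hat\psi_0$ by $a$; the pointwise bound $|f|\le \delta^{-1}|e^{it\Delta}\hat\psi_0|$ together with \thmref{th.BBZ} then puts $f$ simultaneously in $L^4(\T^2;L^2(0,T))$ and in $L^2((0,T)\times\T^2)$, and $af=G\in L^{4/3}(\T^2;L^2(0,T))$ makes the Duhamel/Euler--Lagrange computation legitimate; all of these steps check out. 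The paper does not truncate: it proves \propref{p:hum}, the equivalence of (i) control with $a\in L^2$ and $f\in L^4(\T^2;L^2(0,T))$, (ii) control with $a\in L^4$ and $f\in L^2$, and (iii) the observability \eqref{observation}, via the operators $R$ and $S$, the identity $(f,Sv_0)_{L^2}=-i(Rf,v_0)_{L^2}$, the closed graph theorem, and bijectivity of the bounded, coercive self-adjoint operator $-iR\circ S$; the $L^2$-localization case is then recovered by the substitution $a\mapsto a^2$ (the control being essentially the free evolution $e^{it\Delta}v_0$, which lies in $L^4(\T^2;L^2(0,T))$ by \thmref{th.BBZ}), with \thmref{th.1} applied to $W=a^{1/2}$, and a phase factor handling signed $a$. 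Your variant buys a seamless treatment of sign-changing or complex $a$ (no phase-factor step), a single control lying in both function classes at once, and avoids the functional-analytic surjectivity argument; the cost is a constant depending on the truncation level $\delta$ rather than on $a$ through the observability constant for $a^{1/2}$, and you do not obtain the equivalence (i)$\Leftrightarrow$(ii)$\Leftrightarrow$(iii), which the paper records because it is also of independent use. One small caveat about your closing comment: the paper \emph{does} run a duality argument, but in the $L^2$-control framework of item (ii), where the pairing matches exactly the $L^2$ lower bound \eqref{observation}; your objection about needing an $L^{4/3}$-type lower bound applies only if one insists on dualizing directly in the $L^4(\T^2;L^2(0,T))$ control class, which neither you nor the paper does.
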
 

The next result shows that adding an $ L^2 $ damping term results in exponential decay:

\begin{thm}
\label{th.3}
 For $a\in L^2( \T^2)$, $ a \geq 0 $, $ \| a \|_{L^2} > 0$,  there exist $C, c>0$ such that for any $u_0 \in L^2( \T^2)$, the equation 
 \begin{equation}\label{damped} (i \partial_t + \Delta + ia) u=0, \qquad u |_{t=0} = u_0, 
\end{equation}
has a unique global solution $ u \in L^\infty( \R; L^2( \T^2))\cap L^4( \T^2; L^2_{\rm{loc}}(\R)) $ and 
\begin{equation}\label{damped}
 \| u\|_{L^2( \T^2)} (t) \leq C e^{-ct} \| u_0 \|_{L^2( \T^2)}.
 \end{equation}
\end{thm}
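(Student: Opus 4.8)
The plan is to reduce Theorem~\ref{th.3} to an observability estimate for the \emph{damped} propagator and to derive that estimate by transplanting, through a perturbative Duhamel computation, the observability estimate for the \emph{free} Schr\"odinger group that is already contained in Theorem~\ref{th.2}. The preliminary reductions are elementary: pairing the damped equation with $u$ and taking the real part yields, after the usual regularisation (licit because $u\in L^4(\T^2;L^2_{\mathrm{loc}})$, so that $t\mapsto\int_{\T^2}a\,|u(t,\cdot)|^2\,dx$ lies in $L^1_{\mathrm{loc}}$), the energy identity $\tfrac{d}{dt}\|u(t)\|_{L^2(\T^2)}^2=-2\,\|\sqrt a\,u(t)\|_{L^2(\T^2)}^2$; in particular $t\mapsto\|u(t)\|_{L^2}$ is non-increasing, which gives the global $L^\infty(\R;L^2)$ bound, while existence, uniqueness and the $L^4(\T^2;L^2_{\mathrm{loc}}(\R))$ regularity follow from the Strichartz estimates of Theorem~\ref{th.BBZ} and a contraction argument on short time intervals, after splitting $a=(a-M)_+ + \min(a,M)$ so that the first summand has arbitrarily small $L^2$ norm and the second lies in $L^\infty$. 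Granting all this, it suffices to find $T>0$ and $\delta\in(0,\tfrac12]$ with
\begin{equation*}
  \delta\,\|u_0\|_{L^2(\T^2)}^2\;\le\;\int_0^T\|\sqrt a\,u(t)\|_{L^2(\T^2)}^2\,dt
\end{equation*}
for every solution $u$ of the damped equation; combined with the energy identity this gives $\|u(T)\|_{L^2}^2\le(1-2\delta)\|u_0\|_{L^2}^2$, and iterating over the intervals $[nT,(n+1)T]$ (the equation being autonomous) together with monotonicity of the $L^2$ norm produces the claimed exponential decay with $c=-\tfrac1{2T}\log(1-2\delta)$ (the degenerate case $2\delta=1$ being immediate).

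For the free observability estimate I start from Theorem~\ref{th.2}. By the HUM duality and the reversibility of the free flow, the controllability statement there is equivalent to the inequality $\|v_0\|_{L^2(\T^2)}\le C\,\|a\,v\|_{L^{4/3}(\T^2;L^2(0,T))}$, valid for every $T>0$ and every free solution $v(t)=e^{it\Delta}v_0$. Since $a\ge0$, I factor $a\,v=\sqrt a\cdot(\sqrt a\,v)$ and apply H\"older in $x$ (with $\tfrac34=\tfrac14+\tfrac12$) together with $\|\sqrt a\|_{L^4(\T^2)}=\|a\|_{L^2(\T^2)}^{1/2}$ to obtain $\|a\,v\|_{L^{4/3}(\T^2;L^2(0,T))}\le\|a\|_{L^2}^{1/2}\,\|\sqrt a\,v\|_{L^2((0,T)\times\T^2)}$, hence
\begin{equation*}
  \|v_0\|_{L^2(\T^2)}\;\le\;C\,\|a\|_{L^2}^{1/2}\,\|\sqrt a\,v\|_{L^2((0,T)\times\T^2)}\qquad\text{for all free solutions }v .
\end{equation*}
(Equivalently, one could pick a superlevel set $\{a\ge\epsilon\}$ of positive measure and invoke the $L^\infty$-weight observability estimate with $W=\mathbf{1}_{\{a\ge\epsilon\}}$, using $W\le\epsilon^{-1/2}\sqrt a$.)

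To transplant this to the damped flow, fix $T>0$, let $u$ solve the damped equation with data $u_0$, let $v(t)=e^{it\Delta}u_0$, and set $w=v-u$, so that $w(0)=0$ and $(i\partial_t+\Delta)w=i\,a\,u$. Duhamel's formula and the inhomogeneous Strichartz estimate of Theorem~\ref{th.BBZ} (applied with source $a\,u\in L^{4/3}(\T^2;L^2(0,T))$) give $\|w\|_{L^4(\T^2;L^2(0,T))}\le C_T\,\|a\,u\|_{L^{4/3}(\T^2;L^2(0,T))}$, and the point is that the source is controlled by the \emph{dissipated energy}: factoring $a\,u=\sqrt a\cdot(\sqrt a\,u)$ and using the same H\"older inequality twice,
\begin{equation*}
  \|\sqrt a\,w\|_{L^2((0,T)\times\T^2)}\;\le\;\|a\|_{L^2}^{1/2}\,\|w\|_{L^4(\T^2;L^2(0,T))}\;\le\;C_T\,\|a\|_{L^2}\,\|\sqrt a\,u\|_{L^2((0,T)\times\T^2)} .
\end{equation*}
Applying the free observability estimate to $v$ and using $\|\sqrt a\,v\|\le\|\sqrt a\,u\|+\|\sqrt a\,w\|$ then yields $\|u_0\|_{L^2(\T^2)}\le C\,\|a\|_{L^2}^{1/2}\,(1+C_T\|a\|_{L^2})\,\|\sqrt a\,u\|_{L^2((0,T)\times\T^2)}$, which is exactly the observability estimate for the damped flow sought above (the constraint $\delta\le\tfrac12$ being automatic, since the energy identity forces $2\int_0^T\|\sqrt a\,u\|^2\le\|u_0\|^2$).

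I expect the last perturbative estimate to be the only delicate point. The usual mechanism for absorbing a lower-order perturbation is unavailable here because $a$ lies merely in $L^2$; the way out is to realise that $w$ need not be bounded by $\|u_0\|_{L^2}$ — which would close only for small $\|a\|_{L^2}$ and would otherwise force a compactness/unique-continuation detour — but only by $\|\sqrt a\,u\|_{L^2}$, the quantity already governed by the energy identity. The two H\"older applications, pairing one factor $\sqrt a\in L^4(\T^2)$ with the $L^4(\T^2;L^2)$ smoothing furnished by Theorem~\ref{th.BBZ} while keeping the other factor alongside the square-integrable dissipation density, are what make the argument self-contained for arbitrary $a\in L^2$; establishing well-posedness of the damped equation for such rough $a$ (via the splitting of $a$ above) is a secondary, routine matter.
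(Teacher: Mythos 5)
Your proposal is correct and follows essentially the paper's own argument: the same reduction of exponential decay to an observability estimate for the damped flow via the energy identity and iteration over time windows, the same well-posedness mechanism (Theorem \ref{th.BBZ} plus a splitting of $a$ into a small-$L^2$ part and a bounded part), and the same transfer step in which the Duhamel term with source $au$ is bounded in $L^{4/3}(\T^2;L^2(0,T))$ by $\|\sqrt a\|_{L^4}\|\sqrt a\,u\|_{L^2}$ and absorbed against the free observability with weight $\sqrt a$. The only cosmetic differences are that you run this last step directly and quantitatively where the paper argues by contradiction along a sequence, and that you recover the free observability from Theorem \ref{th.2} via HUM duality rather than citing Theorem \ref{th.1} with $W=a^{1/2}\in L^4$, which gives it immediately.
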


As shown in \S \ref{HUM} both results follow from an the observability estimate. We should think of $ a $ in Theorem \ref{th.2} as 
$ W^2 $ where $ W $ appears in the following statement:

\begin{thm}
\label{th.1}
Suppose that $ W \in L^4 ( \TT^2 ) $, $ \| W \|_{L^4} >0 $. Then for any $ T > 0$ there exists
$ K $ such that for $ u \in L^2 ( \TT^2 ) $, 
\begin{equation}
\label{eq:th1}   \| u \|_{ L^2 ( \TT^2 ) } \leq K \| W e^{ it \Delta} u \|_{ L^2 ( 
( 0 ,T )_t \times \TT^2 ) } .
\end{equation}
\end{thm}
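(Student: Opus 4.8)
The plan is to reduce the rough‑weight estimate \eqref{eq:th1} to its known counterpart with continuous weights, exploiting the semiclassical two‑microlocal machinery of \cite{BZ4} together with the Strichartz‑type bounds of \cite{BBZ}. The starting point is the standard contradiction setup: suppose \eqref{eq:th1} fails, so there is a sequence $ u_n \in L^2(\TT^2) $ with $ \| u_n \|_{L^2} = 1 $ but $ \| W e^{it\Delta} u_n \|_{L^2((0,T)\times\TT^2)} \to 0 $. The first step is to pass to a defect measure $ \mu $ on $ \TT^2 $ (a semiclassical or microlocal defect measure associated to $ u_n $), which is a probability measure invariant under the geodesic flow on the relevant energy shell; invariance and the structure of $ \TT^2 $ force $ \mu $ to be a superposition of measures that are, in each rational direction, constant along the corresponding closed geodesics — this is exactly the rigidity that makes the torus tractable, and it is the content of the unique continuation / propagation arguments already used in \cite{BZ4}.

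Next I would confront the difficulty that $ W \in L^4 $ need not be continuous, so $ W $ does not immediately test against $ \mu $ in a useful way. The key is that by \cite[Proposition 2.2]{BBZ} (Theorem \ref{th.BBZ}) the solutions $ e^{it\Delta} u_n $ are bounded in $ L^4(\TT^2 ; L^2(0,T)) $, so $ |e^{it\Delta}u_n|^2 $ is bounded in $ L^2(\TT^2; L^1(0,T)) $; combined with $ \| W e^{it\Delta}u_n\|_{L^2} \to 0 $ and Hölder, this transfers the vanishing to the defect measure in an $ L^2 $-averaged sense, and shows that $ \mu $ must vanish on the set $ \{ W \neq 0 \} $, a set of positive measure. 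The plan is then to contradict this: the flow‑invariance structure of $ \mu $, together with the fact (from Fubini) that a positive‑measure set must meet a positive‑measure family of the closed geodesics in positive one‑dimensional measure, forces $ \mu $ to have positive mass there — this is precisely the point where the "rough" structure enters, and it is where I expect the main work to lie. One has to upgrade the classical observability‑from‑open‑sets argument on $ \TT^2 $ (Haraux–Jaffard) to observability from measurable sets, by carefully tracking that the semiclassical estimates of \cite{BZ4} localize in phase space finely enough that the $ L^4 $ Strichartz gain compensates for only having an $ L^\infty $, rather than continuous, weight.

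The main obstacle, then, is the interface between the low‑regularity weight $ W $ and the microlocal defect measure: concentration of $ e^{it\Delta}u_n $ could in principle occur on a set of measure zero where $ W $ is large, or $ W $ could be supported away from where the mass concentrates in a way that a naive limiting argument misses. Overcoming this requires the quantitative, non‑concentration form of the torus estimates — that mass cannot escape into thin tubes faster than the $ L^4 $ bound allows — so that testing \eqref{eq:th1} against a smooth approximation $ W_\varepsilon $ to $ W $ produces an error controlled uniformly in $ n $. Once that is in place, the contradiction closes: $ \mu $ is a nonzero invariant measure vanishing on a positive‑measure set, which is incompatible with the propagation structure on $ \TT^2 $. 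Finally, the case $ W \in L^4 $ versus the (stronger, easier) hypotheses in Theorems \ref{th.2} and \ref{th.3} is handled by the duality/HUM argument recorded in \S\ref{HUM}, so it suffices to prove \eqref{eq:th1} as stated.
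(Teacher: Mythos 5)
Your overall frame (argue by contradiction, use the $L^4$ bound of Theorem \ref{th.BBZ} to make $|e^{it\Delta}u_n|^2$ pair against a rough weight, and extract an $L^2$ density for the spatial marginal of the limit measure) does match the paper's strategy, and that identification of the role of \cite[Proposition 2.2]{BBZ} is correct. But there are two genuine gaps. First, a structural one: you attach a semiclassical defect measure directly to the unlocalized sequence $u_n$. Without a frequency scale $h_n$ and a spectral cutoff there is no such measure, and in any case the measure says nothing about mass sitting at bounded frequencies. The paper therefore proves first a semiclassical observability estimate for $\Pi_{h,\rho}u_0$ (Proposition \ref{p:semi}), then sums over a dyadic decomposition to get \eqref{eq:weak} with an error $C\|u\|_{H^{-2}}$, and finally removes that error by the uniqueness--compactness argument of \S\ref{eli}, whose key input is that an eigenfunction of $-\Delta$ on $\TT^2$ is a trigonometric polynomial and so cannot vanish on a set of positive measure. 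None of these low-frequency/compactness steps appear in your outline, and they cannot be skipped.

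Second, and more seriously, the mechanism you propose for the contradiction fails. It is not true that a nonzero flow-invariant measure whose spatial marginal vanishes on a positive-measure set is "incompatible with the propagation structure on $\TT^2$": take $d\mu = g(x)\,dx\,dy\otimes\delta_{\zeta=(0,1)}$ with $g\in L^2(\TT^1)$, $g\geq 0$, supported in a strip on which $W\equiv 0$. This measure is invariant, its marginal is absolutely continuous with $L^2$ density, and it gives no mass to $\{W\neq 0\}$ even though that set has positive measure; Fubini plus invariance cannot rule it out, because invariance only forces constancy \emph{along} the rational direction, not transversally. This rational-direction scenario is exactly where the paper's real work lies: after showing via unique ergodicity (Lemma \ref{l:2}, using $\int a>0$) that almost all of the mass lives on finitely many rational directions, one microlocalizes near a single such direction, Fourier-expands in the periodic variable, and applies a genuinely one-dimensional observability estimate (Lemma \ref{l:3}) with the averaged weight $\langle a\rangle_y\in L^1(\TT^1)$ — valid because in one dimension invariance forces constant densities, so only the mean of the weight matters. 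That quantitative 1D reduction is what produces the contradiction; your sketch has no substitute for it, so as written the proof does not close.
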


To keep the paper easily accessible we present proofs in the case 
when $ \gamma \in \mathbb Q $ in \eqref{eq:con}.
Irrational tori require a more complicated reduction to rectangular 
coordinates -- see \cite[Lemma 2.7 and Fig.1]{BZ4} but the modification 
can be done as in that paper. The crucial 
\cite[Proposition 2.2]{BBZ} is valid for all tori. Another approach to 
treating (higher dimensional) irrational tori can be found in the work of 
Anantharaman--Fermanian-Kammerer--Maci\`a, see \cite[Corollary 1.19, Theorem 1.20]{AFM}. 

Since, as is already clear, \cite[Proposition 2.2]{BBZ} plays a crucial in many proofs we 
recall it in a version used here:

\begin{thm}
\label{th.BBZ}
Let $ T > 0 $. There exists $ C = C_T $ such that for 
\[  u_0\in L^2( \T^2), \ \ \ f \in  L^{\frac 4
  3 } (\T^2 ; L^2(0, T)),\]
the solution to 
$ (i \partial_t + \Delta) u =f$, $ u|_{t=0} = u_0$, 
 satisfies 
\[
 \| u\|_{L^\infty ( ( 0, T ) ; L^2 ( \TT^2 ) \cap L^4(\T^2; L^2( (0,T)))}\\
 \leq C \left(\|u_0\|_{L^2( \T^2)} + \|f\|_{ L^1 ( ( 0 ,T ) ; L^2 ( \TT^2 )) +  L^{\frac 4 3 } (\T^2 ; L^2(0, T))}\right).
\]
\end{thm}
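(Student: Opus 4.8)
The plan is to reduce the statement to the homogeneous \emph{anisotropic} Strichartz estimate and then run Duhamel's formula. Throughout I abbreviate $L^p_x L^q_t := L^p(\TT^2;L^q(0,T))$ and $L^q_t L^p_x := L^q((0,T);L^p(\TT^2))$. The first and main step is
\be
\label{eq:anis}
\|e^{it\Delta}u_0\|_{L^4_x L^2_t} \le C_T\,\|u_0\|_{L^2(\TT^2)},
\ee
which I would prove for $\gamma = p/q\in\Q$ (the irrational case needs the change to rectangular coordinates of \cite{BZ4}). Expanding $u_0 = \sum_{k\in\ZZ^2}\hat u_0(k)e_k$ in the torus exponentials $e_k$, one has $e^{it\Delta}u_0 = \sum_k\hat u_0(k)e^{-i\lambda_k t}e_k$ with $\lambda_k$ proportional to $Q(k)/p^2$, $Q(k):=p^2k_1^2+q^2k_2^2\in\ZZ_{\ge0}$; after rescaling $t$, group by the value $n$ of $Q$: $e^{it\Delta}u_0 = \sum_n e^{-int}g_n$, $g_n:=\sum_{Q(k)=n}\hat u_0(k)e_k$, so $\sum_n\|g_n\|_{L^2}^2=\|u_0\|_{L^2}^2$. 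Since the $n$ are distinct integers, covering $(0,T)$ by $\lceil T/2\pi\rceil$ intervals of length $2\pi$ gives, pointwise in $x$, $\int_0^T|\sum_n e^{-int}g_n(x)|^2\,dt \le C_T\sum_n|g_n(x)|^2$, hence by the triangle inequality in $L^2_x$,
\[
\|e^{it\Delta}u_0\|_{L^4_xL^2_t}^2 = \Bigl\|\int_0^T|e^{it\Delta}u_0|^2\,dt\Bigr\|_{L^2(\TT^2)} \le C_T\sum_n\|g_n\|_{L^4(\TT^2)}^2.
\]
The crux is then the \emph{loss-free} bound $\|g_n\|_{L^4(\TT^2)}\le C\|g_n\|_{L^2(\TT^2)}$, uniformly in $n$: the $m$-th Fourier coefficient of $|g_n|^2$ equals $\sum\hat u_0(k)\overline{\hat u_0(k')}$ over $k-k'=m$, $Q(k)=Q(k')=n$; for $m\ne0$ the conditions $Q(k)=n$ and $Q(k-m)=n$ force $k$ onto the intersection of the ellipse $\{Q=n\}$ with the affine line $\{2B(m,\cdot)=Q(m)\}$ ($B$ the polar form of $Q$), at most two points, while the $m=0$ coefficient is the single number $\|g_n\|_{L^2}^2$. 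Cauchy--Schwarz on these short sums then gives $\|g_n\|_{L^4}^4=\||g_n|^2\|_{L^2}^2\le 3\|g_n\|_{L^2}^4$, and summing the last display yields \eqref{eq:anis}.

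With \eqref{eq:anis} and the conservation law $\|e^{it\Delta}u_0\|_{L^\infty_t L^2_x}=\|u_0\|_{L^2}$ in hand, the free term in $u(t)=e^{it\Delta}u_0-i\int_0^t e^{i(t-s)\Delta}f(s)\,ds$ is under control, and I would split $f=f_1+f_2$ with $f_1\in L^1_t L^2_x$, $f_2\in L^{4/3}_x L^2_t$. The $f_1$-term is bounded in $L^\infty_t L^2_x\cap L^4_x L^2_t$ by $C_T\int_0^T\|f_1(s)\|_{L^2}\,ds$ via Minkowski's integral inequality and the two homogeneous bounds. For the $L^\infty_t L^2_x$ norm of the $f_2$-term, at fixed $t$, by unitarity and duality, $\bigl\|\int_0^t e^{i(t-s)\Delta}f_2(s)\,ds\bigr\|_{L^2_x} = \sup_{\|\varphi\|_{L^2}\le1}\bigl|\int_0^t\langle f_2(s),e^{is\Delta}\varphi\rangle\,ds\bigr| \le \|f_2\|_{L^{4/3}_x L^2_t}\sup_{\|\varphi\|_{L^2}\le1}\|e^{is\Delta}\varphi\|_{L^4_x L^2_t} \le C_T\|f_2\|_{L^{4/3}_x L^2_t}$, using Cauchy--Schwarz in $s$, H\"older in $x$ with exponents $4/3,4$, and \eqref{eq:anis}.

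The remaining, and I expect the hardest, point is to bound the $L^4_x L^2_t$ norm of $v:=-i\int_0^t e^{i(t-s)\Delta}f_2(s)\,ds$ by $C_T\|f_2\|_{L^{4/3}_x L^2_t}$. The \emph{non-retarded} operator $f_2\mapsto e^{it\Delta}\int_0^T e^{-is\Delta}f_2(s)\,ds$ is bounded $L^{4/3}_x L^2_t\to L^4_x L^2_t$ by a $TT^*$ argument from \eqref{eq:anis} and its dual; however one cannot pass from this to the retarded operator by the usual Christ--Kiselev device, because the relevant time exponent is $2$ on both sides. I would instead argue directly, as in \cite[Proposition 2.2]{BBZ}: write $v(t)=e^{it\Delta}w(t)$ with $w(t)=-i\int_0^t e^{-is\Delta}f_2(s)\,ds\in C([0,T];L^2)$ satisfying $\|w\|_{L^\infty_t L^2_x}\le C_T\|f_2\|_{L^{4/3}_x L^2_t}$ (the previous paragraph), expand $\int_0^T|v(t,x)|^2\,dt$ on the Fourier side, carry out the time integration against the phases $e^{-i(Q(k)-Q(k'))t}$, and reduce to a bilinear form controlled, as in the first step, by the ellipse$\,\cap\,$line count together with $L^{4/3}_x\to L^4_x$ duality --- the $t$-dependence of $w$, which is exactly what prevents a direct appeal to \eqref{eq:anis}, being absorbed by the uniform $L^2_x$ bound on $w$. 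Finally, density of smooth data justifies all the above computations and yields continuity, $u\in C([0,T];L^2(\TT^2))$.

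The work is concentrated in two places. The loss-free $L^4$--$L^2$ bound for the blocks $g_n$ hinges entirely on the elementary arithmetic fact that a level set of the quadratic form $Q$ meets any affine line in at most two points --- transparent for $\gamma\in\Q$ after clearing denominators, but for irrational tori requiring the reduction to rectangular coordinates of \cite[Lemma 2.7]{BZ4}. And the retarded $L^4_x L^2_t$ estimate with $L^{4/3}_x L^2_t$ right-hand side is genuinely delicate: the coincidence of the time exponents disables Christ--Kiselev (and, for the same reason, $X^{s,b}$-transference, which wants a strict gap around $b=\tfrac12$), so one must exploit the time-orthogonality structure underlying \eqref{eq:anis} by hand, which is the content of \cite[Proposition 2.2]{BBZ}.
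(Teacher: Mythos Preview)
The paper does not prove this theorem: it is \emph{recalled} from \cite[Proposition~2.2]{BBZ}, and the only part argued here is the homogeneous case $f=0$, sketched in Remark~3. Your derivation of \eqref{eq:anis} follows that sketch essentially verbatim (Parseval in $t$ after grouping by eigenvalue, then Zygmund's $L^4$ bound via the ellipse$\cap$line count), and your treatment of the $L^1_tL^2_x$ source term and of the $L^\infty_tL^2_x$ norm of the $f_2$-Duhamel term by duality is standard and correct.

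The gap is in the retarded $L^{4/3}_xL^2_t\to L^4_xL^2_t$ bound. You correctly identify that Christ--Kiselev is unavailable (time exponent $2$ on both sides) and that one must argue directly. But the mechanism you propose --- write $v(t)=e^{it\Delta}w(t)$, expand $\int_0^T|v(t,x)|^2\,dt$ in Fourier and ``carry out the time integration against the phases $e^{-i(Q(k)-Q(k'))t}$'' --- is exactly where the homogeneous argument breaks down: in the free case the coefficients $\hat w(k)$ are $t$-independent, so integrating the phases over a period collapses the double sum to $Q(k)=Q(k')$; here $\hat w(t,k)$ genuinely depends on $t$ and no such orthogonality survives. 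The sentence ``the $t$-dependence of $w$ \ldots\ being absorbed by the uniform $L^2_x$ bound on $w$'' is not an argument: that bound controls only $\|v\|_{L^2_{t,x}}$, not $\|v\|_{L^4_xL^2_t}$. In effect you defer the one nontrivial step to \cite[Proposition~2.2]{BBZ}, which \emph{is} the statement in question, so the proposal is circular at its crux. The actual argument in \cite{BBZ} does more work at this point (a careful Fourier-side analysis that does not reduce to the homogeneous orthogonality), and that work is what your sketch is missing.
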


\medskip

\noindent
{\bf Remarks.} 1. Theorem \ref{th.1} is equivalent to the same statement with $ W \in L^\infty ( \TT^2 ) $ (by replacing $ W \in L^4 $ by 
$ \bbbone_{ |W | \leq N } W \in L^\infty $ with $ N$ sufficiently large). Both the proof
and derivations of Theorems \ref{th.2} and \ref{th.3} are easier with the 
$ L^4 $ formulation.

\noindent 2. For rational tori and for $ T > \pi $, Theorem \ref{th.1},
 and by Proposition \ref{p:hum} below, Theorems \ref{th.2} and \ref{th.3},  follow from the results of Jakobson \cite{Jak}. That is done by using the 
complete description of microlocal defect measures for eigenfuctions of
$ \mathcal \RR^2 / 2 \pi \ZZ^2 $. We explain this in detail in the 
appendix.

\noindent
3. The starting point of \cite{Jak} and \cite{BBZ} was the classical inequality of Zygmund: 
\begin{equation}
\label{eq:zyg0} \forall \, \lambda \in \N, \ \ \| \sum_{{ |n|^2= \lambda}} c_ne^{in \cdot z} \|_{L^4( \T^2_z)}^2 \leq \frac{\sqrt 5}{ 2 \pi} 
\sum_{{ |n|^2= \lambda}} |c_n|^2 , \ \ \ z \in \TT^2 
= \RR^2 /  2 \pi \ZZ^2 , \ \  n \in \ZZ^2 . 
\end{equation}
In particular for $ \TT^2 = \RR^2/ 2 \pi \ZZ^2 $, we easily see how 
the homogeneous part ($f=0$) in Theorem \ref{th.BBZ}  follows from \eqref{eq:zyg0}. For that
put $ u = \sum_{\lambda } u_\lambda$, $ u_\lambda = \sum_{ 
_{ { |n|^2= \lambda}} } c_n e^{ i n \cdot x} $. 
 Then, using \eqref{eq:zyg0} in the third line,
\[ \begin{split} 
\| e^{ it \Delta } u \|_{ L^4 ( \TT^2, L^2 ( ( 0 , 2 \pi )) ) }^4 
& =  
\int_{ \TT^2 } \left( \int_{0}^{2\pi } \left| \sum_{ \lambda } e^{ it \lambda} 
u_\lambda ( z ) \right|^2 dt \right)^2 dz 
= (2 \pi)^2 \int_{ \TT^2 } \left( \sum_\lambda | u_\lambda ( z ) |^2 \right)^2 dz \\
& 
= ( 2 \pi)^2\int_{\TT^2 } \sum_{ \lambda, \mu} |u_\lambda ( z ) |^2 |u_\mu ( z ) |^2 dz \leq ( 2 \pi)^2\sum_{ \lambda, \mu } \| u_\lambda \|_{L^4}^2 \| u_\mu \|_{L^4}^2 
\\ & \leq 5  \sum_{ \lambda, \mu} \| u_\lambda \|_{L^2}^2 \| u_\mu \|_{L^2}^2 
= 5 \left( \sum_\lambda \| u_\lambda \|_{L^2}^2 \right)^2 = 
5  \| u \|_{L^2}^4. 
\end{split} \]
Generalizations for the time dependent Schr\"odinger equation 
in higher dimensions were obtained by 
A\"issiou--Jakobson--Maci\`a  \cite{AJM}.

\noindent
4. Other than tori, the only other   manifolds for which \eqref{eq:th1} is known for {\em any} non-trivial continuous $ W $ 
are compact hyperbolic surfaces. That was proved by 
Jin \cite{Ji} using results of Bourgain--Dyatlov \cite{BD} and Dyatlov--Jin
\cite{DJ}. 

\medskip\noindent\textbf{Acknowledgements.}
This research was partially supported by 
Agence Nationale de la Recherche through project ANA\'E ANR-13-BS01-0010-03
(NB), by the 
NSF grant DMS-1500852 and by a Simons Fellowship (MZ). MZ gratefully acknowledges the hospitality of Universit\'e Paris-Sud during the writing of this paper. We would also like to thank Alexis Drouot, Semyon Dyatlov and 
Fabricio Maci\`a for helpful comments on the first version.

\section{Semiclassical observability}

We follow the strategy of \cite{BZ4} and \cite{BBZ} and first prove a semiclassical
observability result. For that we define
\begin{equation}
\label{eq:Pih}  \Pi_{h, \rho} (u_0)  := \chi \left( \frac{ - h^2\Delta - 1} 
  {\rho} \right) u_0\,,  \ \ \ \rho > 0  \,,
 \end{equation}
 where $ \chi \in \CIc (( - 1 , 1 ) )$ is equal to $ 1 $ near $ 0 $.  With this notation the main result of this section is
\begin{prop}
\label{p:semi}
Suppose that $ a \in L^2 ( \TT^2 ) $, $ a \geq 0 $, $ \| a \|_{L^2} > 0 $.
For any $ T > 0 $ there exist $ K $, $ \rho_0 > 0 $ and $ h_0 > 0 $ such that
for any $ u_0 \in L^2 ( \TT^2 ) $, 
\begin{equation}
\label{eq:semi}
\| \Pi_{h, \rho} u_0 \|_{ L^2 }^2 \leq K \int_0^T \!\!\int_{ \TT^2} a ( z ) | e^{ i t \Delta} \Pi_{h, \rho} u_0  ( z ) |^2 dz dt,
 \end{equation}
 for $ 0 < \rho < \rho_0 $ and $ 0 < h < h_0 $.
\end{prop}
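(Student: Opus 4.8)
The plan is to argue by contradiction and extract a semiclassical defect measure, following \cite{BZ4,BBZ}, with Theorem~\ref{th.BBZ} supplying the uniform bounds that make the rough weight $ a $ tractable. Assume \eqref{eq:semi} fails for some fixed $ T > 0 $. Applying the negation with $ K = n $ and $ \rho_0 = h_0 = 1/n $ produces sequences $ h_n \to 0 $, $ \rho_n \to 0 $, and $ v_n \in L^2 ( \TT^2 ) $ with $ \Pi_{h_n,\rho_n} v_n \neq 0 $ such that, after normalising $ w_n := \Pi_{h_n,\rho_n} v_n / \| \Pi_{h_n,\rho_n} v_n \|_{L^2} $,
\begin{equation}
\label{eq:plan1}
\| w_n \|_{L^2} = 1 , \qquad \int_0^T\!\!\int_{\TT^2} a ( z ) \, | e^{ it \Delta } w_n ( z ) |^2 \, dz \, dt \; \longrightarrow \; 0 .
\end{equation}
By construction $ \widehat{w_n} $ is supported in the thin annulus $ \{ \xi \in \ZZ^2 : \bigl| h_n^2 |\xi|^2 - 1 \bigr| \leq \rho_n \} $, and since $ e^{it\Delta} $ is unitary on $ L^2 $ and commutes with this spectral cut-off we have $ \| e^{it\Delta} w_n \|_{L^2} \equiv 1 $; hence $ \mu_n := \int_0^T | e^{it\Delta} w_n ( \cdot ) |^2 \, dt $ satisfies $ \mu_n \geq 0 $ and $ \int_{\TT^2} \mu_n = T $.

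The next step uses the Strichartz bound. By Theorem~\ref{th.BBZ} applied with $ f = 0 $, $ \| e^{it\Delta} w_n \|_{ L^4 ( \TT^2 ; L^2 ( 0 , T ) ) } \leq C_T \| w_n \|_{L^2} = C_T $, so $ \{ \mu_n \} $ is bounded in $ L^2 ( \TT^2 ) $. Passing to a subsequence, $ \mu_n \rightharpoonup \mu $ weakly in $ L^2 ( \TT^2 ) $, with $ \mu \geq 0 $ and $ \int_{\TT^2} \mu = T > 0 $. Since $ a \in L^2 ( \TT^2 ) $, \eqref{eq:plan1} gives $ \int_{\TT^2} a \, \mu = \lim_n \int_{\TT^2} a \, \mu_n = 0 $, and because $ a , \mu \geq 0 $ this forces $ \mu = 0 $ a.e. on $ \Omega := \{ a > 0 \} $, a set of positive measure (as $ \| a \|_{L^2} > 0 $). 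The reason for working in the $ L^4 $ framework rather than with measures is exactly this: $ \mu $ is a genuine $ L^2 ( \TT^2 ) $ function, so "$ \mu $ vanishes on a set of positive measure" is meaningful and $ \mu $ can later be disintegrated along lower--dimensional foliations.

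It then remains to show that a weak limit $ \mu $ obtained this way cannot vanish on a set of positive measure unless $ \mu \equiv 0 $ --- which contradicts $ \int_{\TT^2} \mu = T $. This is the heart of the matter, and here I would import the analysis of \cite{BZ4,BBZ}, based on the complete integrability of the geodesic flow on $ \TT^2 $. After reducing to rectangular coordinates (immediate when $ \gamma \in \QQ $; see \cite[Lemma~2.7]{BZ4} otherwise), one decomposes the energy surface $ \{ |\xi| = 1 \} $ according to rational directions $ \omega \in \SP^1 \cap \QQ^2 $ (countably many) and the complementary non--resonant set. Since $ h_n , \rho_n \to 0 $, the non--resonant part of $ \mu $ is invariant under an irrational linear flow on $ \TT^2 $, hence a constant multiple of Lebesgue measure, which cannot vanish on a set of positive measure unless it is zero. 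For each rational direction $ \omega $ a second microlocalisation (in the frequency variable transverse to $ \omega $, at scales intermediate between $ 1 $ and $ \rho_n / h_n $) reduces the corresponding piece of $ \mu $ to the time--space average of the modulus squared of a solution of a one--dimensional Schr\"odinger equation $ ( i \partial_t + \partial_x^2 ) U = 0 $ on the circle obtained by collapsing $ \TT^2 $ transverse to $ \omega $, together with an extra invariance in the remaining variable. Invoking one--dimensional observability from sets of positive measure on $ \TT^1 $ --- elementary (e.g. via a Riemann--Lebesgue plus Ingham argument) and available in this form from \cite{BBZ} --- each such piece is either identically zero or bounded below on $ \Omega $. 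Summing over all directions, $ \mu = 0 $ on $ \Omega $ with $ | \Omega | > 0 $ forces $ \mu \equiv 0 $, the desired contradiction, which proves \eqref{eq:semi}.

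The main obstacle is this last step: carrying out the two--microlocal reduction along resonant directions while keeping the rough weight $ a $ under control. The $ L^4 $ estimate of Theorem~\ref{th.BBZ} is indispensable there --- both to know $ \mu \in L^2 ( \TT^2 ) $, so that it restricts to the one--dimensional leaves, and to obtain the uniform Strichartz--type bounds needed to push the measure through the reduction --- and the irrational--$ \gamma $ case requires in addition the coordinate change of \cite[Lemma~2.7]{BZ4}.
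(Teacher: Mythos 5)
Your setup coincides with the paper's: argue by contradiction, normalise, use Theorem~\ref{th.BBZ} to see that $U_n(z)=\int_0^T|e^{it\Delta}u_n(t,z)|^2dt$ is bounded in $L^2(\TT^2)$, pass to a weak limit $m_T\geq 0$ with total mass $T$ and $\int a\,m_T=0$, and split the accompanying defect measure into irrational directions (killed by unique ergodicity) and rational ones. The genuine gap is in the rational-direction step, which you yourself flag as the heart of the matter and then only sketch. The claim that the piece of the limit attached to a rational direction ``is the time--space average of the modulus squared of a solution of a one-dimensional Schr\"odinger equation'' and hence ``either identically zero or bounded below on $\Omega$'' fails for the limit object: all one knows about that piece is that it is a nonnegative $L^2$ density of the form $g(x)\,dx\,dy$, constant along the flow direction (this is Lemma~\ref{l:RN} plus invariance), and an $L^2$ function of $x$ can perfectly well vanish on a set of positive measure without vanishing identically. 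No observability inequality is inherited by the limit by itself; the rigidity must be created by applying the quantitative one-dimensional estimate (Lemma~\ref{l:3}, with the $L^1$ weight $\langle a\rangle_y$, not an indicator function) to the frequency-localised solutions $v_n=\chi(h_nD_z)u_n$ and their Fourier modes in the invariant variable \emph{before} passing to the limit, which yields a lower bound $\beta>0$ for $\int_0^T\!\!\int \langle a\rangle_y\,|e^{it\Delta}v_n|^2$ that is then contradicted in the limit.

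Carrying this out needs two further ingredients missing from your outline. First, a semiclassical cutoff $\chi(hD)$ cannot isolate a single rational direction: its support contains all directions in $W_m$, and their contribution must be shown to be small; this is exactly the paper's Lemma~\ref{l:2} ($\widetilde\mu_T(W_m)<\epsilon$, proved by unique ergodicity plus Fatou), used together with an $L^\infty$ approximation $a_j$ of the merely-$L^2$ weight $a$, since $a$ itself cannot be paired with the defect measure. Second, ``summing over all directions'' is both unnecessary and delicate (the constant in the one-dimensional estimate depends on the direction through the transverse period and the averaged weight, so there is no uniform lower bound to sum); the paper instead picks a single rational direction carrying positive mass and derives the contradiction from that one direction. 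Finally, the intermediate-scale second microlocalisation you invoke is neither carried out nor needed here: the paper's device is the exact Fourier decomposition in the invariant variable combined with Lemma~\ref{l:2}. As written, the proposal defers precisely these steps to \cite{BZ4,BBZ} while describing them in a form that would not go through.
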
 
The proof of the Proposition proceeds by contradiction: if \eqref{eq:semi} does not hold then there exists $ T > 0 $ such that for any $ n \in \NN $ there
exist $ 0 < h_n < 1/n $, $ 0 < \rho_n < 1/n $ and $ u_n \in L^2 $ for which \begin{equation}
\label{eq:contr}
1 = \| u_n  \|_{ L^2 }^2 > n \int_0^T \!\!\int_{ \TT^2} a ( z ) | e^{ i t \Delta}  u_n  ( z ) |^2 dz dt, \ \ u_n = \Pi_{ h_n , \rho_n } u_n .
\end{equation}
We will use semiclassical limit measures associated to subsequences of $ u_n$'s.

\subsection{Semiclassical limit measures} 
Each sequence $ u_n ( t ) := e^{ i t \Delta } u_n $, is bounded in $L^2_{\rm{loc}} ( \mathbb{R} \times \T^2)$. After possibly choosing a subsequence, $ u_n $'s define
a semiclassical defect measure $\mu $ on $\R_t \times T^* (\T^2_z)$ such
that for any function $\varphi \in 
{\mathcal C}^0_{\rm{c}}  ( \R_t)$ and any $A\in \CIc ( T^*\T^2_z)$, we have 
\begin{equation}
\label{eq:mu}
\begin{split} \langle \mu, \varphi(t) A(z, \zeta)\rangle &   = \lim_{n\rightarrow  \infty} \int_{\R_t \times \T^2} \varphi(t) 
\langle A(z, h_{n} D_z)  u_{n} ( t )   , u_n (t ) 
\rangle_{ L^2 ( \TT^2 ) } dt \,.
\end{split} 
\end{equation}
%Here we also defined the measure $ \mu_\varphi $ on $ T^* \TT^2 $, for
%any fixed $ \varphi \in {\mathcal C}_0^0 ( \RR ) $.

\renewcommand\thefootnote{\ddag}%

The measure $ \mu $ enjoys the following properties:
\begin{gather}\label{eq:propmu} \begin{gathered}
 \mu  (( t_0, t_1 ) \times T^*\T^2_z) = t_1 - t_0  , \ 
\ \
\supp \mu \subset \Sigma := \{(t,z, \zeta) \in \R_t \times \T^2_z \times \R^2_\zeta \; : \;  |\zeta|=1\}, \\
 %\zeta \cdot \nabla_x \mu =0, \ \text{ that is, } \ 
 \int_{\RR} \int_{ T^* \TT^2 } \varphi( t ) A ( z + s \zeta, \zeta ) d \mu = 0,
\ \ \ \varphi \in {\mathcal C}^0_{\rm{c}}  ( \R_t), \  A \in \CIc ( T^*\T^2_z), \end{gathered} \end{gather}
  see \cite{Ma} for the derivation and references.
  
We have an additional property which follows from an easy part of Theorem \ref{th.BBZ} (in the rational case related to the Zygmund inequality \eqref{eq:zyg0}): for any $ \tau \geq 0 $  there exists $ m_\tau 
\in L^2 ( \TT^2 ) $ such that for all $ f \in \mathcal C ( \TT^2 ) $
\begin{equation}
\label{eq:mT} 
\int_{0}^\tau \!\!\int_{ T^* \TT^2 } f ( z) d \mu ( t, z, \zeta ) = 
\int_{\TT^2 } m_\tau ( z ) f ( z) dz .
\end{equation}
In fact, Theorem \ref{th.BBZ} shows that 
\begin{equation}
\label{eq:UnT}
 U_n^\tau  ( z ) := \int_{0}^\tau | u_n ( t , z ) |^2  dt 
 \end{equation}
satisfies 
\begin{equation}
\label{eq:prop22} \| U^\tau_n \|_{ L^2 ( \TT^2_z ) } 
= \| u_n ( t, z ) \|_{ L^4 ( \TT^2_z , L^2 ( ( 0 , \tau ) ) }^2  \leq 
C \| u_n \|_{L^2 ( \TT^2 ) }^2 = C .
\end{equation}
But then, after passing to a subsequence, $ U_n^T $ converges weakly to 
$ m_\tau \in L^2 ( \TT^2 ) $. Since
\[ \int_{0}^\tau \int_{ T^* \TT^2 } f ( z) d \mu ( t, z, \zeta ) = \lim_{ n \to \infty } \int_{0}^\tau \int_{\TT^2} f ( z ) 
U_n^\tau ( z )  dz , \]
this proves \eqref{eq:mT}. 

The assumption \eqref{eq:contr} gives the following
\begin{lem}
\label{l:1}
Let $ m_\tau $ be defined by \eqref{eq:mT} with the measure $ \mu $ obtained from $ e^{i t\Delta } u_n $ satisfying \eqref{eq:contr}. Then
\begin{equation}
\label{eq:l1}
%\int_0^T \!\!\int_{T^*\TT^2 } a ( z ) d \mu ( t , z, \zeta ) =
 \int_{\TT^2 } a ( z ) m_T ( z ) dz = 0 .
\end{equation}
\end{lem}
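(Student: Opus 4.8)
The plan is to read the vanishing off directly from the contradiction hypothesis \eqref{eq:contr}, bypassing \eqref{eq:mT} altogether: the key point is that $m_T$ is not merely the spatial push-forward of $\mu$ but, by the construction preceding the lemma, the weak $L^2$ limit of the nonnegative functions $U_n^T$ of \eqref{eq:UnT}, and the hypothesis $a\in L^2(\TT^2)$ is precisely what makes $a$ an admissible test element against that weak limit.

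First I would rewrite the right-hand side of \eqref{eq:contr} by Fubini: for every $n$,
\[
\int_0^T \!\!\int_{\TT^2} a(z)\,|u_n(t,z)|^2\,dz\,dt
= \int_{\TT^2} a(z)\,U_n^T(z)\,dz
= \langle a, U_n^T\rangle_{L^2(\TT^2)},
\]
and by \eqref{eq:contr} this quantity lies in $[0,1/n)$ — nonnegativity uses $a\ge 0$ — hence tends to $0$ as $n\to\infty$. Second, I would recall that, along the subsequence for which the defect measure $\mu$ exists, one may further extract so that $U_n^T \rightharpoonup m_T$ weakly in $L^2(\TT^2)$; this is legitimate because of the uniform bound \eqref{eq:prop22} (itself a consequence of Theorem \ref{th.BBZ}) together with weak compactness of balls in $L^2$, and it is exactly the way $m_T$ was defined. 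Since $a\in L^2(\TT^2)$ is a fixed element, weak convergence gives $\langle a, U_n^T\rangle_{L^2}\to \langle a, m_T\rangle_{L^2}$. Comparing the two steps yields $\int_{\TT^2} a(z)\,m_T(z)\,dz = 0$, which is \eqref{eq:l1}.

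There is essentially no hard analytic step here; the only things to watch are the bookkeeping of the single subsequence along which both $\mu$ and the weak limit $m_T$ are taken, and the observation that the argument genuinely requires $a\in L^2$ rather than merely $a\in L^1$ — for an $L^1$ weight one could not pass to the limit in $\langle a, U_n^T\rangle$, which is the structural reason the $L^4$/$L^2$ duality built into Theorem \ref{th.BBZ} is needed throughout.
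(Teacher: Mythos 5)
Your argument is correct, and it rests on the same two ingredients as the paper's proof: the uniform bound \eqref{eq:prop22} coming from Theorem \ref{th.BBZ}, and the identification of $m_T$ with the weak $L^2(\TT^2)$ limit of the nonnegative functions $U_n^T$ along the subsequence used to build $\mu$. The difference is in how the pairing with $a$ is handled. The paper stays with the characterization \eqref{eq:mT}, which a priori only licenses testing against continuous functions; it therefore mollifies, choosing smooth $a_j\geq 0$ with $\|a-a_j\|_{L^2}\to 0$, writes $\int m_T a_j=\lim_n\int U_n^T(a_j-a)$ using \eqref{eq:contr}, bounds this by Cauchy--Schwarz and \eqref{eq:prop22} as $\mathcal O(1)\|a-a_j\|_{L^2}$, and then lets $j\to\infty$ using $m_T\in L^2$. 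You instead test the weak $L^2$ convergence $U_n^T\rightharpoonup m_T$ directly against the fixed element $a\in L^2$, and read off $\langle a,m_T\rangle=\lim_n\langle a,U_n^T\rangle=0$ from \eqref{eq:contr}; this is legitimate precisely because $m_T$ is produced in the text as that weak limit (and, given the uniform $L^2$ bound, convergence against continuous test functions upgrades automatically to convergence against all of $L^2$, so the two definitions of $m_T$ agree). Your route is shorter and avoids the double limit; the paper's approximation scheme, on the other hand, is the template that is genuinely needed in the next step (Lemma \ref{l:2}), where the smooth $a_j$ are used for the flow averages $\langle a_j\rangle_S$ and the $L^\infty$ bound, which is presumably why the paper sets up the $a_j$ already here. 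Your closing remark about $a\in L^1$ being insufficient for the $L^2$--$L^2$ duality is also accurate and consistent with the paper's separate treatment of the one-dimensional case, where $L^\infty$ bounds on the limit density are invoked to handle $b\in L^1$.
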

\begin{proof}
We choose 
\begin{equation}
\label{eq:aj}  a_j \in \CI ( \TT^2 )  , \ \ a_j \geq 0 , \ \  
\lim_{ j \to \infty } \| a - a_j\|_{L^2 } = 0 .\end{equation}
 Using \eqref{eq:contr} and then \eqref{eq:prop22} (with the notation introduced in 
\eqref{eq:UnT}), 
\[  \begin{split} \int_{\TT^2} m_T( z )   a_j( z) dz & = 
\lim_{ n \to \infty }   \int_{\TT^2} U_n^T ( z ) ( a_j( z) - a ( z )  ) dz 
\\ & 
= \mathcal O ( \| U_n^T \|_{L^2 ( \TT^2) } ) \| a - a_j \|_{ L^2 ( \TT^2 ) } =
\mathcal O ( 1) \| a - a_j \|_{ L^2 ( \TT^2 ) } . 
\end{split} \]
Since $m_T \in L^2( \T^2)$, letting $ j \to \infty $ shows \eqref{eq:l1}.
\end{proof}

The next lemma shows that our measure has most of its mass on the set of rational directions:

\begin{lem}
\label{l:2}
Suppose that  $\mu $ is defined by $ u_n$ satisfying 
\eqref{eq:contr}. For $ m \in \NN $ define,
\[  W^m := \left\{ ( z , \zeta ) \in T^* \TT^2 \; : \; \zeta = \frac{ ( p , q ) }{
\sqrt{ p^2 + q^2 }} , \  \max( |p|, |q| )  \leq  m , (p, q) \in \Z^2, \ {\rm{gcd}}(p,q) = 1 \right\} , \]
its complement, $ W_m := \complement W^m $, 
and a measure $ \widetilde \mu_T $ on $ T^* \TT^2 $:
\begin{equation}
\label{eq:defwm}  % \langle \widetilde \mu_T , A \rangle := 
\int_{T^* \TT^2} A ( z, \zeta ) d\widetilde \mu_T  := \int_0^T \int_{T^* \TT^2 } 
A ( z, \zeta ) d\mu ( t, z , \zeta ), \ \  A \in \CIc ( T^* \TT^2 ) . 
\end{equation}
Then,
\begin{equation}
\label{eq:l2}
\forall \, \epsilon > 0 \ \exists \, m \ \text{ such that } \
\widetilde \mu_T ( W_m ) <  \epsilon . 
\end{equation}
\end{lem}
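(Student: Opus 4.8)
The plan is to derive the concentration on rational directions from the invariance property of $\mu$ under the geodesic flow combined with the key regularity property \eqref{eq:mT}. First I would recall the structure of the flow on $T^*\TT^2$: the geodesics are straight lines $s \mapsto (z + s\zeta, \zeta)$, and a direction $\zeta$ is \emph{rational} precisely when the corresponding line on $\TT^2$ is periodic (closed), while for \emph{irrational} $\zeta$ the line is equidistributed on $\TT^2$. By the last relation in \eqref{eq:propmu}, the measure $\widetilde\mu_T$ is invariant under each translation $(z,\zeta)\mapsto(z+s\zeta,\zeta)$. Decompose $\widetilde\mu_T = \int \nu_\zeta \, d\sigma(\zeta)$ over the direction variable $\zeta \in \SP^1$; then for $\sigma$-a.e.\ $\zeta$ the fiber measure $\nu_\zeta$ on $\TT^2$ is invariant under the flow by $\zeta$. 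For irrational $\zeta$ this unique ergodicity forces $\nu_\zeta$ to be a multiple of Lebesgue measure on $\TT^2$; for rational $\zeta$, $\nu_\zeta$ is only required to be invariant under a one-parameter family of closed geodesics, so it can be supported on a union of closed lines.

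Next I would isolate the irrational part: let $\widetilde\mu_T^{\mathrm{irr}}$ be the restriction of $\widetilde\mu_T$ to the set $I$ of irrational directions. By the previous paragraph, the spatial marginal of $\widetilde\mu_T^{\mathrm{irr}}$ is absolutely continuous, in fact equal to $(\int_I d\sigma) \, dz$ times a constant density. But the total spatial marginal is $m_T \in L^2(\TT^2)$ by \eqref{eq:mT}, so $\widetilde\mu_T^{\mathrm{irr}}$ pushed to $\TT^2$ has an $L^2$ density; this is automatically fine but does not yet give decay in $m$. The point I actually need is that the \emph{rational} part does \emph{not} exhaust the mass in a pathological way — rather, I want to show the mass escaping to high-denominator rational directions, together with the irrational directions, is small. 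So I reorganize: the complement $W_m$ consists of all irrational directions together with the rational directions of height $>m$. Since $\widetilde\mu_T$ is a finite measure ($\widetilde\mu_T(T^*\TT^2) = T$ by the first identity in \eqref{eq:propmu}), and the sets $W_m$ are nested decreasing with $\bigcap_m W_m = I$ (the purely irrational directions), continuity of measure from above gives $\widetilde\mu_T(W_m) \to \widetilde\mu_T(I)$. Thus it suffices to prove $\widetilde\mu_T(I) = 0$, i.e.\ that $\mu$ charges no irrational direction.

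The heart of the matter is therefore: \emph{the flow-invariant measure $\widetilde\mu_T$ gives zero mass to the set of irrational directions.} I would prove this using \eqref{eq:mT} as follows. Fix a small arc $J \subset \SP^1$ of irrational directions and a test function; for $\zeta \in J$ the flow is uniquely ergodic, so averaging the invariance relation over a long time interval shows that for any $f \in \mathcal C(\TT^2)$,
\begin{equation*}
\int_{\{\zeta \in J\}} f(z)\, d\widetilde\mu_T(z,\zeta) = \left(\int_{\{\zeta\in J\}} d\widetilde\mu_T\right)\cdot \frac{1}{|\TT^2|}\int_{\TT^2} f(z)\,dz.
\end{equation*}
Hence the spatial marginal of $\widetilde\mu_T\!\restriction_{\{\zeta\in J\}}$ is a constant times Lebesgue measure. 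Now I run the argument using a sequence $f_k$ concentrating near a point — but since the marginal is absolutely continuous this gives nothing directly. Instead, the correct mechanism (as in \cite{BZ4, BBZ, Ma}) is a \emph{second microlocalization}: the bound \eqref{eq:prop22}, $\|U_n^\tau\|_{L^2} \le C$, must be propagated to second-microlocal measures along each irrational direction, and there the only invariant object consistent with the $L^2$ bound and the non-concentration of the Schrödinger evolution is zero. I expect this second-microlocalization step — showing that the second-microlocal measure associated to an irrational direction vanishes, using \eqref{eq:prop22} and the dispersive smoothing inherent in Theorem \ref{th.BBZ} — to be the main obstacle, and it is where the specific two-dimensional torus structure (finitely many "resonant" rational directions at each scale) is essential. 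Once irrational directions are eliminated, the nested-set argument closes the proof.
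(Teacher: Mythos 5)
Your reduction to showing $\widetilde\mu_T(W_\infty)=0$ (the sets $W_m$ are nested, $\widetilde\mu_T$ is finite with total mass $T$) and your observation that flow invariance plus unique ergodicity forces the spatial marginal of the irrational part to be a constant multiple of Lebesgue measure are both correct and are exactly the paper's starting point. But at the crucial moment you stop, declare that the needed mechanism is a second microlocalization along irrational directions controlled by \eqref{eq:prop22}, and leave it as "the main obstacle". This is a genuine gap, and the proposed mechanism cannot work as stated, because you never use the hypothesis \eqref{eq:contr}. Without it the conclusion is simply false: a plane wave $u_n=e^{i n_k\cdot z}$ (normalized), with $h_{n}=|n_k|^{-1}$ and $n_k/|n_k|$ converging to an irrational direction $\zeta_0$, satisfies $u_n=\Pi_{h_n,\rho_n}u_n$ and \eqref{eq:prop22} (here $U_n^\tau\equiv\tau$), yet its measure is $dt\otimes dz\otimes\delta_{\zeta_0}$, entirely carried by irrational directions. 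So no refinement of the $L^2$ bound \eqref{eq:prop22} alone, second-microlocal or otherwise, can eliminate the irrational mass; what eliminates it is Lemma \ref{l:1}, i.e.\ the consequence $\int_{\TT^2} a\,m_T\,dz=0$ of the contradiction hypothesis, which says that $\widetilde\mu_T$ integrated against $a(z)$ vanishes.

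The missing step is short and uses pieces you already have. Take $a_j\in\CI(\TT^2)$, $a_j\ge 0$, $\|a_j-a\|_{L^2}\to0$ as in \eqref{eq:aj}. Then, by \eqref{eq:mT} and Lemma \ref{l:1}, $\int a_j\,d\widetilde\mu_T=\int_{\TT^2}(a_j-a)\,m_T\,dz=\mathcal O(\|a_j-a\|_{L^2})$. By the invariance property in \eqref{eq:propmu}, $\int a_j\,d\widetilde\mu_T=\int\langle a_j\rangle_S\,d\widetilde\mu_T$ for every $S>0$; on $W_\infty$ unique ergodicity gives $\langle a_j\rangle_S\to\langle a_j\rangle$ pointwise, so Fatou yields $\int_{W_\infty}a_j\,d\widetilde\mu_T\ge\widetilde\mu_T(W_\infty)\,\langle a_j\rangle$. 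Since $a_j\ge0$, the left-hand side is at most $\int a_j\,d\widetilde\mu_T=\mathcal O(\|a_j-a\|_{L^2})$, while $\langle a_j\rangle\to\langle a\rangle>0$; letting $j\to\infty$ forces $\widetilde\mu_T(W_\infty)=0$, and your nested-set argument then finishes the proof. (This is precisely the paper's argument; your disintegration of $\widetilde\mu_T$ over directions is a fine substitute for the $\langle a_j\rangle_S$--Fatou device, but it still needs $\int a\,m_T\,dz=0$, and the $L^2$ approximation of $a$ by $a_j$ is what makes the $L^2$--only hypothesis on $a$ harmless. No second microlocalization appears at this stage.)
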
 
\begin{proof}
 With $ a_j $'s from \eqref{eq:aj} we then have 
\begin{equation}
\label{eq:muTaj} \int_{T^* \TT^2 } a_j ( z ) d \widetilde \mu_T  ( z ,\zeta )   = \int_{ \TT^2 }
( a_j ( z ) - a ( z ) ) m_T ( z) dz = \mathcal O ( \| a - a_j \|_{L^2 } ) .
\end{equation}
With the notation
$  \langle b \rangle_S ( z, \zeta) :=  \frac{1}{S} \int_{ 0}^S 
b ( z + s \zeta ) ds $, $ b \in \CI  ( \TT^2 ) $, 
the last property in \eqref{eq:propmu} shows that for any $ S > 0 $,  
\[ \int_{T^* \TT^2 } a_j ( z ) d \widetilde \mu_T  ( z ,\zeta )   =
\int_{T^* \TT^2 } \langle a_j \rangle_S ( z , \zeta ) d \widetilde \mu_T  ( z ,\zeta ) . \]
We note that 
\begin{equation}
\label{eq:Wm}  W_{m+1} \subset W_m , \ \ 
W_\infty := \bigcap_{ m =1}^\infty W_m = \{( z , \zeta ) : |\zeta|=1, 
\zeta \in \RR^2 \setminus \mathbb Q^2 \} .
\end{equation}
For $ ( z, \zeta ) \in W_\infty $, unique ergodicity of the flow 
$ z \mapsto z + s \zeta $ shows that $ \langle a_j \rangle_S \to \langle a_j 
\rangle := \int_{\TT^2} a_j ( z ) dz / ( 2 \pi)^2. $ Fatou's Lemma then shows that 
\[ \begin{split}  \int_{W_\infty } a_j ( z ) d \widetilde \mu_T  ( z ,\zeta ) 
& =  \liminf_{ S \to \infty } \int_{W_\infty } \langle a_j \rangle_S ( z , \zeta ) d \widetilde \mu_T  ( z ,\zeta )  \\
& \geq \int_{W_\infty} \liminf_{ S \to \infty } \langle a_j \rangle_S ( z , \zeta ) d \widetilde \mu_T  ( z ,\zeta ) 
= \widetilde \mu_T ( W_\infty ) \langle a_j \rangle. \end{split} \]
Combining this with \eqref{eq:muTaj} shows that 
\[   \widetilde \mu_T ( W_\infty ) \leq \frac{ C \| a  - a_j \|_{L^2 } }
{\langle a_j \rangle} \rightarrow 0 , \ \ j \to \infty ,\]
(since $ \| a \|_{L^2 } > 0 $ and $ a \geq 0 $, $ \langle a_j \rangle 
\to \langle a \rangle >  0 $) which gives $ \widetilde \mu_T ( W_\infty ) = 0 $. But then \eqref{eq:Wm} implies that
$ \lim_{ m \to \infty } \widetilde \mu_T ( W_m ) = \widetilde \mu_T ( W_\infty ) = 0 $,
concluding the proof.
\end{proof}

\subsection{Reduction to one dimension}

We start with the following 
\begin{lem}
\label{l:RN}
Suppose that in \eqref{eq:l2} $ m $ is large enough so that
$ \widetilde \mu_T (  W_m ) < T  $ and that $ ( z, \zeta_0 ) \in \supp ( \widetilde \mu_T |_{ \complement W_m } ) $.
Then there exists $ F \in L^2 ( \TT^2 ) $ such that 
\begin{equation}  
\label{eq:muT0} 
 \widetilde \mu_T |_{ \TT^2 \times \{ \zeta_0 \} } = 
F  \otimes \delta_{ \zeta = \zeta_0 }  , \ \ \ \| F \|_{ L^2 ( \TT^2) } \neq 0 , \ \ F \geq 0 .
\end{equation}
\end{lem}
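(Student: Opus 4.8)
\emph{Proof plan.} The idea is to exploit two structural facts. First, $\complement W_m = W^m$ is a \emph{finite} union of ``sheets'' $\TT^2\times\{\zeta\}$ lying over rational directions $\zeta$ with bounded denominators, and distinct such sheets sit at positive distance from one another on the unit circle. Second, the full push-forward of $\widetilde\mu_T$ to the base $\TT^2$ is absolutely continuous with the $L^2$ density $m_T$ produced in \eqref{eq:mT}.

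First I would observe that $\zeta_0$ is one of the finitely many rational directions occurring in $W^m$, and that $\widetilde\mu_T|_{\TT^2\times\{\zeta_0\}}$, being a positive measure carried by the single sheet $\TT^2\times\{\zeta_0\}$, is automatically of the form $\nu\otimes\delta_{\zeta=\zeta_0}$, where $\nu:=\pi_*\bigl(\widetilde\mu_T|_{\TT^2\times\{\zeta_0\}}\bigr)$ and $\pi:T^*\TT^2\to\TT^2$ is the base projection; this yields the product structure in \eqref{eq:muT0} with no further work. To see that $\nu$ has an $L^2$ density, I would test \eqref{eq:defwm} against $A(z,\zeta)=f(z)\psi(\zeta)$ with $\psi\in\CIc$ equal to $1$ near $|\zeta|=1$ — legitimate since $\widetilde\mu_T$ is supported in $\Sigma$ — and then invoke \eqref{eq:mT} with $\tau=T$ to conclude that $\pi_*\widetilde\mu_T=m_T\,dz$ with $m_T\in L^2(\TT^2)$. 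Since $\widetilde\mu_T|_{\TT^2\times\{\zeta_0\}}\le\widetilde\mu_T$ as positive measures and push-forward is monotone, $\nu\le m_T\,dz$; hence $\nu=F\,dz$ with $0\le F\le m_T$ almost everywhere, so $F\ge 0$ and $F\in L^2(\TT^2)$.

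Finally, for the non-triviality $\|F\|_{L^2}\ne 0$: since $(z,\zeta_0)\in\supp\bigl(\widetilde\mu_T|_{\complement W_m}\bigr)$ and the sheets composing $\complement W_m$ are isolated, every sufficiently small ball $B$ about $(z,\zeta_0)$ meets $\complement W_m$ only inside $\TT^2\times\{\zeta_0\}$, so $(\widetilde\mu_T|_{\complement W_m})(B)>0$ forces $\nu$ to charge every neighbourhood of $z$ in $\TT^2$; in particular $\int_{\TT^2}F>0$. (The hypothesis $\widetilde\mu_T(W_m)<T$ enters only through $\widetilde\mu_T(T^*\TT^2)=T$, to guarantee that $\widetilde\mu_T|_{\complement W_m}$ is nonzero and hence that such a point $(z,\zeta_0)$ exists.) I do not anticipate a real obstacle in this lemma: the only steps needing care are the passage from the ``$\delta$ in $\zeta$'' assertion to the finiteness of the direction set in $W^m$, and the insertion of the $\zeta$-cutoff $\psi$ when moving between \eqref{eq:defwm} and \eqref{eq:mT}. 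The genuine difficulty of the overall argument lies elsewhere — in \lemref{l:2} and in the one-dimensional analysis built on the present lemma.
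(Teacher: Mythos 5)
Your proposal is correct and follows essentially the same route as the paper: push forward to the base, dominate $\pi_*\bigl(\widetilde\mu_T|_{\TT^2\times\{\zeta_0\}}\bigr)$ by $\pi_*\widetilde\mu_T=m_T\,dz$ with $m_T\in L^2$ from \eqref{eq:mT}, and conclude $F\,dz$ with $0\le F\le m_T$ (the paper phrases this via Radon--Nikodym, you via direct domination, which is the same mechanism). Your explicit argument for $\|F\|_{L^2}\neq 0$ using the isolation of the finitely many sheets of $\complement W_m$ is a point the paper leaves implicit, and it is a correct completion.
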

\begin{proof}
Let $ \pi : T^*\TT^2 \to \TT^2 $ be the natural projection map, $ \pi ( x, \xi) = x $. Then, using \eqref{eq:mT} and 
\eqref{eq:defwm},  for any Lebesgue measurable set $ E \subset \TT^2 $, 
\begin{equation}
\label{eq:FmT}  \pi_* ( \widetilde \mu_T |_{ \TT^2 \times \{ \zeta_0 \}}) ( E ) 
\leq  \pi_* ( \widetilde \mu_T ) ( E ) = \int_E m_T  ( z ) dz ,  \  \ \ m_T \in L^2. \end{equation}
The Radon--Nikodym theorem 
then shows that $ \pi_* ( \widetilde \mu_T |_{ \TT^2 \times \{ \zeta_0 \}})  = g m_T  $ where $ g $ is measurable, $m_T$-a.e. finite. The inequality \eqref{eq:FmT}
gives $ F := g m_T \leq m_T $ almost everywhere which shows that
$ F \in L^2 $. 
%We proceed as in the proof of \eqref{eq:mT} but with an additional localization. For that let $ \varphi \in \CIc ( \RR) $, $ \varphi ( 0 ) = 1 $. Then for $ \epsilon > 0 $ fixed,
%\begin{equation}
%\label{eq:gofz} \int_0^T  
%g ( z ) \left[ \varphi\left( \tfrac{ h_n D_z - \zeta_0 }\epsilon \right) u_n ( t, z )\right] \overline{ u_n ( t ,  z) } dz dt \longrightarrow 
%\int \varphi\left( \tfrac{ \zeta - \zeta_0 }\epsilon \right) g ( z ) 
%d \widetilde \mu_T ( z, \zeta ) . \end{equation}
%On the other hand, replacing $ U_n^\tau $ in \eqref{eq:UnT} by 
%\[ U_n^{T,\epsilon} ( z ) := \int_0^T \left[ \varphi\left( \tfrac{ h_n D_z - \zeta_0 }\epsilon \right) u_n ( t, z )\right] \overline{ u_n ( t ,  z) } dt \]
%and using Theorem \ref{th.BBZ} shows that,
%\[  \| U_n^{T, \epsilon } \|_{ L^2 ( \TT^z ) } \leq 
%\| \varphi \|_{L^\infty } \| u_n ( t , z ) \|_{L^4 ( \TT^2_z , L^2 ( 0 , T ) )} \leq C \| u_n\|_{L^2 ( \TT^2 ) } = C .
%\] 
%Hence a subsequence of $ U_n^{T, \epsilon } $ converges weakly to 
%$ F_\epsilon $ in $ L^2 $, $ \| F_\epsilon \|_{ L^2 } \leq C $, which combined with \eqref{eq:gofz} gives
%\[\int_{\TT^2 } F_\epsilon ( z ) g ( z ) dz  = \int \varphi\left( \tfrac{ \zeta - \zeta_0 }\epsilon \right) g ( z ) 
%d \widetilde \mu_T ( z, \zeta ) \to \int_{T^*\TT^2} g ( z ) d ( \widetilde \mu_T |_{ \zeta = \zeta_0 } )(z ,\zeta) , \ \ \epsilon \to 0 . \]
%Since for a sequence $ \epsilon_j \to 0 $, $ F_{\epsilon_j }  $ converge weakly to $ F \in L^2 $, 
%\eqref{eq:muT0} follows.
\end{proof}

Using \cite[Lemma 2.7]{BZ4} (see also \cite[Fig.1]{BZ4}) we can assume 
(by changing the torus but not $ \Delta_z $) that $ \zeta_0 = ( 0, 1 ) $, 
$ z = ( x, y ) $, $ x \in \RR/A_1 \ZZ $, $ y \in \RR /B_1 \ZZ $, $ A_1/B_1 \in 
\mathbb Q $. Abusing the notation we will keep the notation $ u_n $ and $ 
\mu $ for the transformed functions. The invariance property in 
\eqref{eq:propmu} and  the proof of Lemma \ref{l:1} show now that
\begin{equation}
\label{eq:muT00} 
\begin{gathered} 
 ( \widetilde \mu_T |_{ \TT^2 \times \{ ( 0,1 )  \} } ) = 
 g ( x) dxdy \otimes \delta_{0} ( \xi) \otimes\delta_1 ( \eta )  , \ \
 g \in L^2 ( \TT^1 ) , \ \ \| g \|_{ L^2 ( \TT^2 )} \neq 0 \\
\int_{ \TT^2 } g ( x ) a ( x, y ) d x dy = 0 .
\end{gathered}
 \end{equation}
Let us choose $ \chi \in \CIc ( \RR^2 )$ supported near $ | \zeta |= 1 $
and such that 
\begin{equation}
\label{eq:suppchi}
 \supp \chi \cap \complement W_m = \{ ( 0 , 1 ) \} . 
\end{equation}
We then define 
%\begin{equation}
%\label{eq:vn}  
$ v_n := \chi ( h D_z ) u_n $ and
$\nu :=  |\chi ( \zeta )|^2 \mu \neq 0 $. 
%\end{equation}
Definition \eqref{eq:mu} shows that $ \nu $ is the semiclassical defect measure associated to $ v_n ( t ) := e^{ i t \Delta } v_n = \chi ( h
D_z ) e^{ it \Delta }  u_n $ which in particular shows that 
\begin{equation}
\label{eq:beta} 
 \| v_n \|_{ L^2 ( \TT^2 ) }^2 = \nu ( [ 0 , T ] \times T^* \TT^2 ) \geq \beta :=  \int_{ \TT^2 } g ( x ) dx dy  > 0 .
\end{equation}

The reduction to a one dimensional problem is based, as in \cite{BZ4}, on 
a Fourier expansion in $ y $ (assuming $ B_1 = 2 \pi $ for notational simplicity):
\begin{equation}
\label{eq:Four}  v_n ( t ) ( x, y ) := [ e^{ i t \Delta } v_n]  (x, y) =
\sum_{ k \in \ZZ } [e^{ i t \partial_x^2 } v_{n , k }] ( x ) e^{-it k^2 + i ky } \end{equation}

We will now use a one dimensional result proved in \S \ref{1D} below:
\begin{lem}
\label{l:3}
Suppose that $ b \in L^1 ( \TT^1 ) $,  $ b\geq 0 $, $\| b \|_{ L^1 } > 0 $
and that $ T > 0 $. Then there exists 
$ C $ such that for $ w \in L^2 ( \TT^1 ) $,
\begin{equation}
\label{eq:l3}
\| w \|_{L^2 ( \TT^1 ) }^2 \leq C \int_0^T  b ( x) [e^{ i t \partial_x^2} w](x )|^2 dx dt .
\end{equation}
\end{lem}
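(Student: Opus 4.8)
The plan is to prove the one–dimensional observability estimate \eqref{eq:l3} by the same contradiction/defect-measure scheme used in the two-dimensional argument, but now exploiting the completely explicit spectral structure of $-\partial_x^2$ on $\TT^1$. First I would reduce to the case of an interval of rationally commensurable length, normalizing $\TT^1 = \RR/2\pi\ZZ$, so that $e^{it\partial_x^2}$ acts on $w = \sum_k c_k e^{ikx}$ by $e^{it\partial_x^2} w = \sum_k c_k e^{-itk^2 + ikx}$. Suppose \eqref{eq:l3} fails: then there is $T>0$, a sequence $w_n \in L^2(\TT^1)$ with $\|w_n\|_{L^2}=1$, and $\int_0^T b(x)\,|e^{it\partial_x^2}w_n(x)|^2\,dx\,dt \to 0$. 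After passing to a subsequence $w_n(t) := e^{it\partial_x^2} w_n$ defines a semiclassical (or ordinary) defect measure $\nu$ on $\RR_t \times T^*\TT^1$; here, however, because the group $e^{it\partial_x^2}$ is $2\pi$-periodic in $t$ and the relevant frequencies are quantized, I would instead work directly with the weak-$*$ limit $\Phi$ of the densities $|w_n(t,x)|^2$ as measures on $(0,T)\times\TT^1$ — this is legitimate because, exactly as in \eqref{eq:prop22} via Theorem \ref{th.BBZ}'s one-dimensional analogue (or the elementary bound $\|w_n(t,x)\|_{L^4_x L^2_t} \le C\|w_n\|_{L^2}$ coming from almost-orthogonality of $\{k^2\}$), the densities $U_n^T(x) := \int_0^T |w_n(t,x)|^2\,dt$ are bounded in $L^2(\TT^1)$ and hence have a weak-$L^2$ limit $m_T^{(1)} \in L^2(\TT^1)$.

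The key structural input is that in one dimension the "invariance" of the limit measure is trivial — translation along $\zeta = \pm 1$ is all of $\TT^1$ — so unique ergodicity of the translation flow forces the $x$-marginal of the limit to be the constant $\langle m_T^{(1)} \rangle = \|w_n\|_{L^2}^2 \cdot (\text{const}) = T/(2\pi)$, i.e. $m_T^{(1)}$ is a positive constant, in particular $\int b\, m_T^{(1)}\,dx = \|b\|_{L^1}\cdot T/(2\pi) > 0$. But arguing as in Lemma \ref{l:1}, approximating $b$ in $L^1$ by $b_j \in \CI(\TT^1)$ — here I need $L^1$ approximation since $b$ is only assumed $L^1$, and the $L^2$-bound on $U_n^T$ lets me pair $U_n^T$ against $b - b_j \in L^1$ once I also use an $L^\infty$-in-$x$ control, so it is cleaner to first truncate $b$ to $b \wedge N \in L^\infty$ and approximate that, then let $N \to \infty$ by monotone convergence — the hypothesis $\int_0^T b |w_n(t)|^2 \to 0$ gives $\int b\, m_T^{(1)}\,dx = 0$. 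These two facts contradict each other, so \eqref{eq:l3} holds.

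The main obstacle, and the point deserving care, is the passage from "$\int_0^T\!\!\int b|w_n(t)|^2 \to 0$" to a statement about the limit measure $m_T^{(1)}$ when $b$ is only in $L^1$: the natural pairing is $L^\infty$-against-$L^1$, but $U_n^T$ is only bounded in $L^2$, not $L^\infty$. I would handle this exactly as indicated above — truncate $b$, use the $L^2$ bound to pass to the limit against the bounded truncation, and recover the full statement by monotone convergence — being careful that the weak-$L^2$ limit $m_T^{(1)}$ is genuinely pointwise constant and not merely constant "in the sense of distributions." A clean alternative that sidesteps defect measures entirely: expand $w_n(t,x) = \sum_k c_k^{(n)} e^{-itk^2+ikx}$, compute $\int_0^T\!\!\int b|w_n(t)|^2 = \sum_{j,k} c_j^{(n)}\overline{c_k^{(n)}} \,\widehat{b}(k-j)\,\int_0^T e^{it(k^2-j^2)}\,dt$, isolate the diagonal $j=k$ (which contributes $T\,\widehat b(0)\sum_k|c_k^{(n)}|^2 = T\,\|b\|_{L^1}(2\pi)^{-1}$, bounded below), and control the off-diagonal terms using $|\int_0^T e^{it(k^2-j^2)}dt| \le C/|k^2-j^2| \le C/|k-j|$ together with a Schur-test / Hausdorff–Young bound on the kernel $\widehat b(k-j)/|k-j|$ — but since $b$ is only $L^1$, $\widehat b$ is only bounded, and this off-diagonal sum need not be small uniformly in $n$; the spreading of frequency support of $w_n$ (no single frequency carries all the mass, by $\|w_n\|=1$ and the contradiction hypothesis) is what saves it, and making that quantitative is again the crux. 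Either route works; the defect-measure route is shorter to write, so that is the one I would present, with the truncation of $b$ as the one genuinely delicate step.
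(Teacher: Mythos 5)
There is a genuine gap, and it sits at the heart of your argument: you run the contradiction directly on the full estimate, with no frequency localization, and then invoke ``unique ergodicity of translation'' to conclude that the weak limit $m_T^{(1)}$ of $U_n^T(x)=\int_0^T|e^{it\partial_x^2}w_n(x)|^2\,dt$ is a positive constant. That invariance is a property of the \emph{semiclassical defect measure of frequency-localized data} (it comes from the commutator argument together with the localization of the measure to $|\xi|=1$ at scale $h$); it is not a property of the limit of the densities $|w_n(t,x)|^2$ for an arbitrary normalized sequence. In particular, if part of the mass of $w_n$ stays at bounded frequencies, i.e.\ $w_n\rightharpoonup u\neq 0$ weakly in $L^2$, then the corresponding contribution $\int_0^T|e^{it\partial_x^2}u(x)|^2dt$ to $m_T^{(1)}$ is in general \emph{not} constant in $x$, and no dynamical/ergodicity argument rules it out. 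Handling this compact part requires a separate unique-continuation step (a weak limit $u$ with $b^{1/2}e^{it\partial_x^2}u\equiv 0$ forces $u$ to be built from eigenfunctions, i.e.\ trigonometric polynomials, which cannot vanish on a set of positive measure), and your proposal omits it entirely. Your ``clean alternative'' via the Fourier expansion does not close this gap either, as you yourself note that the off-diagonal sum is not controlled.

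For comparison, the paper's proof has three stages that your sketch collapses into one: (i) a \emph{semiclassical} observability estimate for $\pi_{h,\rho}w$, proved by contradiction with defect measures, where the key input is the one-dimensional bound of \cite[Proposition 2.1]{BBZ} giving $L^\infty_x L^2_t$ control, so that the limit densities $f_\pm$ lie in $L^\infty(\TT^1)$ and can legitimately be paired with $b\in L^1$ (your truncation $b\wedge N$ plus monotone convergence is a workable substitute for this particular pairing issue, but only for frequency-localized data, where $\partial_x\omega_T=0$ actually holds and forces $f_\pm$ to be constants); (ii) a dyadic decomposition in frequency, summing the localized estimates with pseudodifferential commutator errors, which yields \eqref{eq:l3} up to a $C\|w\|_{H^{-2}(\TT^1)}$ term; (iii) elimination of that error term by the compactness/unique-continuation argument of \S\ref{eli}. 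Steps (ii) and (iii) are exactly what is needed to pass from the frequency-localized regime, where your ergodicity heuristic is correct, to general $w\in L^2(\TT^1)$; without them the argument does not go through.
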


Let $ a_j $ be again given by \eqref{eq:aj}. 
We apply \eqref{eq:l3} to \eqref{eq:Four} with 
$ b = \langle a  \rangle_y := \frac{1}{ 2 \pi} \int_{ \TT^1} a ( x, y ) dy $.
That gives,
\begin{equation*}
\begin{split} 
0 < \beta & = \| v_n \|_{L^2 ( \TT^2 ) }^2 = 
2 \pi \sum_{k \in \ZZ } \| v_{n,k} \|_{ L^2 ( \TT^1) }^2 
\leq C'  \int_0^T \langle a \rangle_y ( x ) \sum_{ k \in \ZZ} | [ e^{ it \partial_x^2 } 
v_{n,k} ] ( x ) |^2 dx dt 
\\
& = C \int_0^T \int_{ \TT^2 } \langle a  \rangle_y ( x ) 
[ e^{ it \Delta } v_n ] ( x, y ) dx dy  dt \\
& = C \int_0^T \int_{\TT^2} \langle a_j \rangle_y ( x) [ e^{ it \Delta } v_n ] ( x, y ) dx dy  dt + 
\mathcal O ( \| a - a_j \|_{L^2 (
\TT^2 ) }) \\
& \longrightarrow C  \int_{T^* \TT^2 }
\langle a_j \rangle_y ( x ) d \, \widetilde \nu_T ( x, y , \xi, \eta ) + \mathcal O ( \| a - a_j \|_{L^2 (
\TT^2 ) })   , \ \ 
n \longrightarrow  \infty ,
\end{split}
\end{equation*}
where $ \widetilde \nu_T = |\chi ( \xi, \eta )|^2 \widetilde \mu_T $ 
(see \eqref{eq:defwm}). In particular for every $ j $,  
\begin{equation}
\label{eq:ajn}
0 < \alpha \leq \int_{T^* \TT^2 }
\langle a_j \rangle_y ( x ) d \, \widetilde \nu_T ( x, y , \xi, \eta )  +  \mathcal O ( \| a - a_j \|_{L^2 (
\TT^2 ) } )   , \ \
\alpha := \frac{ \beta}{ C }.
\end{equation}
We now decompose the integral in \eqref{eq:ajn} as $
I_1 + I_2 $ and use \eqref{eq:muT00}:
\begin{equation}
\label{eq:aj1} 
\begin{split}  
I_1 := \int_{ \TT^2 \times \{ ( 0 , 1 ) \} } \langle a_j \rangle_y ( x ) d \, \widetilde \nu_T ( x, y , \xi, \eta ) & = 
\int_{ \TT^2 }  g ( x ) a_j ( x, y ) dx dy \\
& = 
\int_{ \TT^2 } g ( x ) ( a_j ( x, y ) - a ( x, y ) ) dx dy \\
& \leq {\sqrt{ 2 \pi }} \| g \|_{ L^2 ( \TT^1)  }  \| a_j - a \|_{L^2 ( \TT^2)  }  . 
\end{split}
\end{equation}
We now use use \eqref{eq:l2} and \eqref{eq:suppchi} to estimate the remainder:
\[ \begin{split} 
I_2 & := \int_{ ( \xi , \eta ) \neq ( 0 , 1 ) } 
\langle a_j \rangle_y ( x ) d \, \widetilde \nu_T ( x, y , \xi, \eta )
\leq \int_{W_m } \langle a_j \rangle_y ( x ) d \, \widetilde \nu_T ( x, y , \xi, \eta ) \\
& \leq \| a_j \|_{ L^\infty } \, \widetilde \nu_T ( W_m ) 
 \leq \epsilon  \| a_j \|_{ L^\infty },  .
\end{split}
\]
We now combine these two estimates with \eqref{eq:ajn} to obtain:
\[  0 < \alpha \leq K \| a_j - a \|_{L^2  ( \TT^2 ) } + \epsilon \| a_j \|_{ L^\infty ( \TT^2 ) },  \]
where the constant $ K $ depends on $ a $, $ u_n $ and $ \zeta_0 = ( 0, 1 ) $ but not on $ \chi $ and $ m $. 
Hence, we first choose 
$ j $ large enough so that $ K \| a_j - a \|_{L^2  ( \TT^2 ) } < \alpha/2  $ and then $ m $ large enough
and $ \chi $ satisfying \eqref{eq:suppchi} so that 
$ \epsilon \| a_j \|_{ L^\infty } < \alpha/2 $. This provides a contradiction 
and proves Proposition \ref{p:semi}.

\renewcommand\thefootnote{\dag}%

\subsection{One dimensional estimate}
\label{1D}

We now prove Lemma \ref{l:3}. The semiclassical part proceeds along the lines of the proof of Proposition \ref{p:semi}. The derivation of 
\eqref{eq:l3} from the semiclassical estimate follows the same arguments needed in \S \ref{obs} and we will refer to that section for details.

\begin{proof}[Proof of Lemma \ref{l:3}]
We start with a semiclassical statement: for every $ T $ there exist $ K$, 
$ \rho_0 $ and $ h_0 $ such that for $ 0 < h < h_0 $ and $ 0 < \rho < \rho_0 $
we have the analogue of \eqref{eq:semi}:
\begin{equation}
\label{eq:semi1}
\begin{gathered}
\| \pi_{h, \rho} u_0 \|_{ L^2 ( \TT^1) }^2 \leq K \int_0^T \!\!\int_{ \TT^1}b ( z ) | e^{ i t \Delta} \pi_{h, \rho} u_0  ( z ) |^2 dz dt,
\ \
\pi_{h, \rho} (u_0)  := \chi \left( \frac{  h^2 D_x^2  - 1} 
  {\rho} \right) u_0 \,.
  \end{gathered}
 \end{equation}
We proceed by contradiction which leads to an analogue of \eqref{eq:contr}
and then to a measure $ \omega_T  $ analogous to $ \widetilde \mu_T $ 
(see \eqref{eq:defwm}) on $ T^* \TT^1 $ and satisfying:
$  \supp \omega_T \subset \{ \xi = \pm 1 \} $, 
 $\partial_x \omega_T = 0$ ,
where the derivative is taken in the distributional sense.  
 
From \cite[Proposition 2.1]{BBZ}\footnote{See 
\url{https://math.berkeley.edu/~zworski/corr_bbz.pdf} for a corrected 
version.} and the argument in Lemma \ref{l:RN} (with weak convergence in 
$L^2 $ replaced by the weak$*$ convergence  in $ L^\infty = ( L^1)^* $) 
we obtain
\[  d \omega_T = \sum_{ \pm } f_\pm (x) dx \otimes \delta_{\pm 1} ( \xi) d\xi, \ \ \  f_\pm \in L^\infty (\TT^1) , \ \ f_\pm  \geq 0 . \]
But the fact that $ \partial_x \omega_T = 0 $ and the analogue of
Lemma \ref{l:1} show that 
$ f_\pm ( x ) = c_\pm \geq 0 $, $c_+ + c_- > 0 $ ,  
 $( c_+ + c_- )\int _{\TT^1 } b ( x ) dx = 0$, 
which is a contradiction proving \eqref{eq:semi1}.

From the semiclassical estimate we obtain 
\[ \| u_0\|_{L^2 ( \TT^1 ) } \leq C 
 \int_0^T \!\!\int_{ \TT^1}b ( z ) | e^{ i t \Delta}  u_0  ( z ) |^2 dz dt
 +  C \| u_0 \|_{ H^{-2}( \TT^1 )} .\]
 That is done by the same argument recalled in \S \ref{dya} below. 
 Finally the error term $ \| u_0 \|_{ H^{-1}( \TT^1 ) } $ is removed -- see 
\S \ref{eli} for review of the procedure for doing (applying \cite[Proposition 2.1]{BBZ} again).  
 \end{proof}

\section{Observability estimate}
\label{obs}
To prove Theorem \ref{th.1} we first prove a weaker statement involving an error term:

\begin{prop}
\label{p:weak}
Suppose that $ W \in L^4 ( \TT^2 ) $, $ a \geq 0 $ and 
$ \| W \|_{ L^4 } \neq 0 $.  Then for any $ T > 0 $, there exists
$ K $ such that for $ u \in L^2 $,
\begin{equation}
\label{eq:weak}
\| u \|_{ L^2 ( \TT ) }^2 \leq C \int_0^T \int_{\TT^2 } 
| W(z) e^{ i t \Delta } u ( z ) |^2 dz dt + C \| u \|_{ H^{-2} ( \TT^2 ) }.
\end{equation}
\end{prop}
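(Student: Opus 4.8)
The plan is to deduce the weak observability estimate \eqref{eq:weak} from the semiclassical observability estimate of Proposition \ref{p:semi} by a standard decomposition into frequency shells, together with the $L^4$-smoothing estimate of Theorem \ref{th.BBZ}. First I would take a partition of unity $1 = \chi_0(\xi) + \sum_{j\geq 1} \chi(\xi/R_j)$ adapted to dyadic shells $|\xi|\sim R_j = 2^j$; equivalently, writing $u = \sum_j u_j$ where $u_j$ is the spectral projection of $u$ onto frequencies $|\xi|^2 \sim R_j^2$, and setting $h_j = 1/R_j$. On each shell the rescaled function is spectrally localized near $\{|h_j\xi| = 1\}$, so after absorbing the cutoff $\chi((-h_j^2\Delta - 1)/\rho)$ — which acts as the identity on that shell once $\rho$ is fixed — Proposition \ref{p:semi} applies with $a = W^2$ (note $W^2 \in L^2$ since $W \in L^4$, and $W^2 \geq 0$, $\|W^2\|_{L^2} = \|W\|_{L^4}^2 \neq 0$) to give
\[
\|u_j\|_{L^2(\TT^2)}^2 \leq K \int_0^T \int_{\TT^2} W(z)^2 |e^{it\Delta} u_j(z)|^2 \, dz \, dt,
\]
uniformly for $h_j < h_0$, i.e. for all $j$ large enough; the finitely many low shells are handled by the $H^{-2}$ error term since on them $\|u_j\|_{L^2} \lesssim R_j^2 \|u_j\|_{H^{-2}} \lesssim \|u\|_{H^{-2}}$.

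Summing over $j$ gives $\|u\|_{L^2}^2 \lesssim K \sum_j \int_0^T \int W^2 |e^{it\Delta}u_j|^2 + C\|u\|_{H^{-2}}^2$, and the task is to replace the sum of the pieces $\int W^2|e^{it\Delta}u_j|^2$ by $\int W^2 |e^{it\Delta}u|^2 = \int_0^T\int |W e^{it\Delta}u|^2$. Since $\sum_j \int W^2 |e^{it\Delta}u_j|^2 = \int W^2 \big(\sum_j |e^{it\Delta}u_j|^2\big)$, and square-summing is not the same as $|\sum_j e^{it\Delta}u_j|^2$, this step requires an almost-orthogonality argument: I would exploit that $e^{it\Delta}$ preserves the frequency shells, that the shells have bounded overlap, and — crucially — use the Strichartz/$L^4$ bound of Theorem \ref{th.BBZ} in the form $\|e^{it\Delta}v\|_{L^4(\TT^2;L^2(0,T))} \lesssim \|v\|_{L^2}$ to control cross terms. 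Concretely, by Cauchy–Schwarz in $j$ weighted against a fixed $\ell^1$ sequence, or by a bilinear estimate $\|(e^{it\Delta}u_j)(e^{it\Delta}u_k)\|_{L^2([0,T]\times\TT^2)} \lesssim$ (gain in $|j-k|$) $\|u_j\|_{L^2}\|u_k\|_{L^2}$, one summs the off-diagonal contributions. The cleanest route, mirroring \S\ref{dya} which the text references, is probably to run the contradiction/defect-measure scheme directly at the level of \eqref{eq:weak}: if it fails there is a sequence $u^{(n)}$, $\|u^{(n)}\|_{L^2}=1$, $\|u^{(n)}\|_{H^{-2}}\to 0$, $\int_0^T\int |We^{it\Delta}u^{(n)}|^2 \to 0$; the $H^{-2}\to 0$ condition forces the mass to escape to high frequencies, each high-frequency piece is killed by Proposition \ref{p:semi}, and Theorem \ref{th.BBZ} provides the compactness (weak $L^2$ convergence of $\int_0^T |u^{(n)}(t,\cdot)|^2\,dt$) needed to pass to the limit in the observation term with the $L^4$ weight $W^2 \in L^2$.

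The main obstacle I anticipate is exactly this interaction between the frequency decomposition and the rough weight $W$: because $W^2$ is only in $L^2(\TT^2)$ (not $L^\infty$), one cannot freely commute $W$ past frequency cutoffs with bounded errors, and the observation term $\int W^2 |e^{it\Delta}u|^2$ does not split cleanly over shells. This is precisely where Theorem \ref{th.BBZ} is indispensable: $W^2 |e^{it\Delta}u|^2 \in L^1([0,T]\times\TT^2)$ by Hölder ($W^2 \in L^2$, $|e^{it\Delta}u|^2 \in L^2$ via the $L^4$ Strichartz bound), and the weak-$L^2$ convergence of $U_n^T = \int_0^T |e^{it\Delta}u^{(n)}|^2\,dt$ to some $m_T \in L^2$ (as in \eqref{eq:UnT}–\eqref{eq:prop22}) lets one conclude $\int W^2 m_T = 0$, against which Proposition \ref{p:semi} applied shell-by-shell (or the defect measure of Lemma \ref{l:1}) gives the contradiction. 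I would therefore present the proof in the contradiction form, reusing verbatim the limit-measure machinery of \S2 with $a$ replaced by $W^2$, and cite \S\ref{dya} for the routine passage from the semiclassical estimate to \eqref{eq:weak}.
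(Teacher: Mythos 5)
Your overall architecture (dyadic frequency shells, Proposition~\ref{p:semi} applied with $a=W^2\in L^2$, finitely many low shells absorbed into the $H^{-2}$ term) is the same as the paper's \S\ref{dya}, and you correctly isolate the crux: with $W$ only in $L^4$, the per-shell observation terms $\sum_k\int_0^T\!\int_{\TT^2}W^2\,|e^{it\Delta}\varphi_k(-\Delta)u|^2$ do not obviously reassemble into $\int_0^T\!\int_{\TT^2}|W\,e^{it\Delta}u|^2$. But at exactly that point your proposal stalls, and the paper's resolving idea is missing. The trick is to use the equation: since $e^{it\Delta}u=\sum_\lambda e^{-it\lambda}u_\lambda$, the spectral cutoff $\varphi_k(-\Delta)$ acting on the evolved solution can be replaced by the Fourier multiplier $\varphi_k(D_t)$ in the time variable. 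Unlike $\varphi_k(-\Delta)$ (or your $\Pi_{h,\rho}$), the operator $\varphi_k(D_t)$ commutes \emph{exactly} with multiplication by the rough function $W(z)$, and the family $\{\varphi_k(D_t)\}_k$ is exactly almost-orthogonal since $\sum_k\varphi_k^2\le 1$; hence $\sum_k\int_{\RR}\|\psi(t)\varphi_k(D_t)\,W e^{it\Delta}u\|_{L^2(\TT^2)}^2\,dt$ is bounded by $\int_{\RR}\|\tilde\psi(t)\,W e^{it\Delta}u\|_{L^2(\TT^2)}^2\,dt$ up to commutators of $\varphi_k(D_t)$ with the time cutoff $\psi(t)$ only, which are $\mathcal O(R^{-Nk})$ by one-dimensional semiclassical calculus in $t$ and are absorbed into the $H^{-2}$ error (the finiteness of $\|W e^{it\Delta}u\|_{L^2((0,T)\times\TT^2)}$ being supplied by Theorem~\ref{th.BBZ} and H\"older). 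No bilinear estimate and no regularity of $W$ beyond $W^2\in L^2$ are needed.

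Neither of your proposed substitutes closes the gap. For the almost-orthogonality route: with only $L^2$ data and $W^2\in L^2$, the natural bound on a cross term, $\int_0^T\!\int_{\TT^2}W^2\,(e^{it\Delta}u_j)\overline{(e^{it\Delta}u_k)}\le\|W\|_{L^4}^2\,\|e^{it\Delta}u_j\|_{L^4(\TT^2;L^2(0,T))}\|e^{it\Delta}u_k\|_{L^4(\TT^2;L^2(0,T))}\lesssim\|u_j\|_{L^2}\|u_k\|_{L^2}$, carries no gain in $|j-k|$, so the off-diagonal sum is not summable; a genuine gain would require a bilinear refinement of the torus $L^4$ estimate that the paper neither proves nor uses. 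For your ``cleanest route'' (contradiction directly at the level of \eqref{eq:weak}): from $\|u^{(n)}\|_{L^2}=1$, $\|u^{(n)}\|_{H^{-2}}\to0$ and $\int W^2 U^T_n\to0$ you only learn that the weak $L^2$ limit $m_T$ of $U^T_n$ satisfies $\int W^2m_T=0$; but without spectral localization of $u^{(n)}$ there is no semiclassical scale, hence no defect measure, no invariance and no lower bound on $m_T$, so there is nothing to contradict. Proposition~\ref{p:semi} applies only to data confined to a single spectral window, and invoking it ``shell-by-shell'' inside this compactness argument reintroduces precisely the cross-term problem (the cutoffs do not commute with $W$). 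The missing ingredient, in short, is the exchange $\varphi_k(-\Delta)\leftrightarrow\varphi_k(D_t)$, which is what lets the rough weight pass through the frequency decomposition.
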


\subsection{Dyadic decomposition}
\label{dya}
The proof of \eqref{eq:weak} uses a dyadic decomposition as in 
\cite[\S 5.1]{BBZ} and \cite[\S 4]{BZ4} and we recall the argument adapted to the setting of this paper. For that let
$ 1= \varphi_{0} (r)^2 + \sum_{k=1}^{\infty} \varphi_{k} ( r )^2$, where
\begin{gather*} % 1= \varphi_{0} (r)^2 + \sum_{k=1}^{\infty}
  \varphi_{k} ( r ) :=   \varphi(R ^{-k} |r| ), \enspace R > 1, \quad \varphi \in \CIc ( ( R^{-1}, R ) ; [0, 1]  ), \quad  (
R^{-1}, R) \subset \{ r \; : \;  \chi( r / \rho)  \geq
\textstyle{\frac12} \},
\end{gather*}
with $ \chi $ and $ \rho $ same as in \eqref{eq:Pih} and \eqref{eq:semi}.
Then,  we decompose $u_0$
dyadically:
$
\|u_0 \|^2_{L^2} = \sum_{k=0}^\infty  \|\varphi_{k}( - \Delta ) u_0
\|_{L^2}^2, 
$
which will allow an application of Proposition \ref{p:semi}.

\begin{proof}[Proof of Proposition \ref{p:weak}]
Let $\psi \in \CIc ( ( 0, T ) ; [ 0, 1 ] ) $ satisfy  $\psi ( t ) > 1/2$,
on $ T/3 < t < 2T / 3 $.
Proposition~\ref{p:semi} applied with $ a = W^2 $ shows that 
\begin{equation}\label{eq:piece}
\|\Pi_{h, \rho } u_0\|_{L^2}^2 \leq K \int_\RR \psi( t )^2 \| W \mathrm{e}^{\mathrm{i} t  \Delta } \Pi_{h, \rho } u_0\|_{L^2 ( \TT^2 ) }^2 dt, \quad 0 < h < h_0.
\end{equation}
Taking $K $ large enough so that $R^{-K } \leq h_{0}$ we apply
\eqref{eq:piece} to the dyadic pieces:
\begin{align*}
\|u_0 \|^2_{L^2} & =\sum_{ k \in \ZZ }  \|\varphi_{k}( -\Delta)u_0\|_{L^2}^2 \\
&\leq \sum_{k=0}^{K }  \|\varphi_{k}( -\Delta )u_0\|_{L^2}^2 +C
\sum_{k=K +1}^\infty  \int_{0}^T \psi ( t ) ^2 \| W \varphi_{k}(  -\Delta)\,
\mathrm{e}^{\mathrm{i}t \Delta } u_0\|_{L^2( \TT^2) } ^2 dt \\
& = \sum_{k=0}^{K }  \|\varphi_{k}( -\Delta )u_0\|_{L^2}^2 + C
\sum_{k=K +1}^\infty  \int_\RR \| \psi ( t ) W \varphi_{k}(  D_t )
\,\mathrm{e}^{\mathrm{i}t \Delta } u_0\|_{L^2( \TT^2) } ^2 dt.
\end{align*}
In the last equality we used the equation and 
replaced $ \varphi ( -\Delta ) $ by $
\varphi ( D_t ) $. 

We need to consider the
commutator of $  \psi \in \CIc ( ( 0,  T) ) $ and $
\varphi_{k} ( D_t )\,{=}\,\varphi ( R^{-k}  D_t ) $. If  $
\widetilde \psi \in \CIc ((0,T))$  is equal to $ 1 $ on $
\supp  \psi$ then the semiclassical pseudo-differential
calculus with $ h = R ^{-k}$ (see for instance
\cite[Chapter~4]{EZB}) gives
\begin{equation}\label{eq:wideps}
\psi ( t )  \varphi_{k} ( D_t )  =    \psi ( t )  \varphi_{k}
(D_t )   \widetilde  \psi ( t ) +  E _k ( t, D_t),\quad
\partial^\alpha E_k   = {\mathcal O}   ( \langle t \rangle^{-N } \langle
\tau \rangle^{-N}
R ^{- N k } ),
\end{equation}
for all $ N$ and uniformly in $ k $.

The errors obtained from $ E_k $ can be absorbed into the $
\| u_0 \|_{ H^{-2}  ( \TT^2 ) } $ term on the right-hand
side. Hence we obtain,
\begin{align*}
\|u_0 \|^2_{L^2} &\leq C \|u_0 \|_{H^{-2} ( \T^2)} ^2+ C \sum_{k=0}^\infty \int_{0}^T
 \|   \psi (  t)  \varphi_{k}(  D_t  )\, W \mathrm{e}^{ \mathrm{i}t \Delta }u_0 \|^2_{L^2( \TT^2)} dt\\
& \leq\widetilde C \|u_0 \|_{H^{-2} ( \T^2)} ^2+ K   \sum_{k=0}^\infty
\langle   \varphi_{k}(  D_t  )^2  \widetilde \psi ( t ) \, W \mathrm{e}^{\mathrm{i}t \Delta }u_0,  \widetilde \psi( t )\,
W \mathrm{e}^{\mathrm{i}t \Delta}u_0,  \rangle_{L^2 ( \RR_t \times \TT^2)} \\
& \leq  \widetilde  C \|u_0 \|_{H^{-2} ( \T^2)} ^2 + K \int_\RR  \| \widetilde \psi ( t ) \, W \mathrm{e}^{\mathrm{i}t \Delta} u_0\|^2_{L^2( \TT^2)} dt  \\
& \leq \widetilde C \|u_0 \|_{H^{-2} ( \T^2)} ^2 + K \int_{0}^T  \|\, W\mathrm{e}^{\mathrm{i}t \Delta } u_0\, \|^2_{L^2( \TT^2)} dt,
\end{align*}
where the last inequality is \eqref{eq:weak} in the statement of the
proposition.
\end{proof}

\subsection{Elimination of the error term}
\label{eli}

We now eliminate the error term on the right hand side of \eqref{eq:weak}.
For that we adapt the now standard method of Bardos--Lebeau--Rauch \cite{BLR} just we did at the end of \cite[\S 4]{BZ4}. The argument recalled there shows that if
\begin{equation}
\label{eq:defN0} N := \{ u \in L^2 ( \TT^2 ) \; : \; W e^{ i t \Delta } u \equiv 0 
\ \text{ on $ ( 0 , T ) \times \TT^2 $}\} 
\end{equation}
is non-trivial then since $ iW e^{it\Delta } \Delta u= \partial_t W e^{ i t \Delta } u \equiv 0$ on $ ( 0 , T ) \times \TT^2 $, then $N$ is invariant by the action of $\Delta$, and hence  
 it contains a nontrivial $ w \in L^2 (\TT^2 ) $ such that for some $ \lambda $,
\[ ( - \Delta - \lambda) w = 0 , \ \ \ W w \equiv 0 . \]
But then $ w $ is a trigonometric polynomial vanishing on a set of 
positive measure which implies that $ w \equiv 0 $. Hence
\begin{equation}
\label{eq:N0}
N = \{ 0 \} .
\end{equation}

\begin{proof}[Proof of Theorem \ref{th.1}]
Suppose the conclusion \eqref{eq:th1} were not to valid. Then 
there exists a  sequence $ u_n  \in L^2 ( \TT^2 ) $ such that
\begin{equation}
\label{eq:unW}  \| u_n \|_{ L^2 ( \TT^2 ) } = 1 , \ \ \ 
\| W e^{it \Delta } u_n \|_{ L^2 ( ( 0 , T ) \times \TT^2 ) }\to 0, \ \ n 
\to \infty . \end{equation}
By passing to a subsequence we can then assume that 
$ u_n $  converging weakly in $ L^2  ( \TT^2 ) $ and strongly in $ H^{-2} 
( \TT^2 ) $ to some $ u \in L^2 $.  From Proposition \ref{p:weak} we would also have
\[ 1 = \| u_n \|_{ L^2 ( \TT^2) }^2 \leq C 
\int_0^T \| W e^{it \Delta } u_n \|_{ L^2 ( \TT^2 ) }^2 dt + C \| u_n \|^2_
{H^{-2} ( \TT^2 ) } . \]
Hence, 
\begin{equation}
\label{eq:cont1} 1 \leq C \lim_{n \to \infty } \| u_n \|_{H^{-2}  ( \TT^2 ) } = 
C\| u \|_{ H^{-2} ( \TT^2 ) } 
 \ \ \Longrightarrow \ \ u \not \equiv 0 . \end{equation}
Let $ W_j \in \CI ( \TT^2 ) $ satisfy $ \| W - W_j \|_{L^4 ( \TT^2 ) }
\to 0 $. For $ \varphi \in \CIc ( ( 0, T ) \times \TT^2 ) $, due to 
distributional convergence, Theorem \ref{th.BBZ} and
\eqref{eq:unW},  
\[ \begin{split} |\langle W_j e^{ it\Delta } u , \varphi \rangle| & = \lim_{ n \to \infty }
|\langle e^{ it \Delta } u_n , W_j  \varphi \rangle|  \leq 
\lim_{ n \to \infty }
 \left( | \langle ( W_j - W ) e^{ i t \Delta } u_n  , \varphi \rangle | + 
\langle W e^{ it \Delta } u_n , \varphi \rangle \right) \\ & \leq
\|\varphi \|_{ L^2  }   \| ( W_j - W ) e^{it \Delta } u_n \|_{L^2 ( ( 0 , T ) \times 
\TT^2 ) }
 \leq C\|\varphi \|_{L^2 }  \|  W_j - W  \|_{L^4( \TT^2 ) } . 
\end{split}
\]
On the other hand the same argument shows that 
\[ |  \langle W e^{it \Delta  }u, \varphi \rangle | \leq 
| \langle  W_j  e^{ it \Delta } u ,   \varphi \rangle | + 
C \| \varphi\|_{L^2}  \|  W_j - W  \|_{L^4( \TT^2 ) } . \]
Combining the two inequalities we see that
$ |  \langle W e^{it \Delta  }u, \varphi \rangle | \leq C
\lim_{ j \to \infty } \|  W_j - W  \|_{L^4( \TT^2 ) } = 0 . 
$ which means that $ W e^{ it \Delta } u \equiv 0 $. Thus $ u \in N $ given by \eqref{eq:defN0}
and by \eqref{eq:N0}, $ u = 0 $. This contradicts \eqref{eq:cont1} 
completing the proof.

\end{proof}

\section{The HUM method: proofs of Theorems \ref{th.2} and \ref{th.3}}
\label{HUM}
We now show the equivalence of the stabilization, control and observability properties in our context. The proof is a variation on the classical HUM method \cite{Li}, but since our damping and localization functions are {\em not} in $L^\infty$ it requires additional care. 
\begin{prop}
\label{p:hum}
The following are equivalent (for fixed $T>0$).
\begin{enumerate}
\item \label{i} Let $a \in L^2( \T^2;\RR), \| a \|_{L^2} >0$. 
Then for any $u_0\in L^2( \T^2)$ there exists $f \in L^4( \T^2 ; L^2(0,T))$ such that the solution $u$ of~\eqref{eq:con} satisfies $u|_{t=T} =0$
\item \label{ii} Let $a \in L^4( \T^2;\RR), \| a \|_{L^4} >0$. 
Then for any $u_0\in L^2( \T^2)$ there exists $f \in L^2( (0,T)\times \T^2)$ such that the solution $u$ of~\eqref{eq:con} satisfies $u |_{t=T} =0$
\item \label{iii}Let $a \in  L^4( \T^2;\RR), \| a \|_{L^4} >0$. Then there exists $C>0$ such that for any $v_0 \in L^2( \T^2)$,
\begin{equation}
\label{observation}
\| v_0 \|_{L^2( \T^2)}^2 \leq C \|  a e^{it \Delta} v_0 \| _{L^2( (0,T) \times \T^2)}.
\end{equation}
\end{enumerate}
 
\end{prop}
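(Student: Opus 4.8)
The plan is to prove Proposition~\ref{p:hum} as a cycle of implications, with the observability estimate~\eqref{observation} serving as the pivot. First I would establish the equivalence of \eqref{iii} with the $L^2$-localization version of observability, namely that \eqref{iii} for some $a \in L^4$ with $\|a\|_{L^4}>0$ is implied by Theorem~\ref{th.1} applied with $W = a$: indeed $\|a e^{it\Delta} v_0\|_{L^2((0,T)\times\TT^2)} = \|W e^{it\Delta} v_0\|_{L^2}$, so \eqref{eq:th1} is literally \eqref{observation}. This already gives \eqref{iii} outright, but for the equivalence statement I would still carry out the cyclic argument \eqref{i} $\Rightarrow$ \eqref{iii} $\Rightarrow$ \eqref{ii} $\Rightarrow$ \eqref{i} (or a convenient rearrangement), since the point of the proposition is to show these are interchangeable.

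The core of the work is the HUM duality. For the implication from observability to controllability, I would set up the standard functional $J(v_0) = \tfrac12\int_0^T\int_{\TT^2} |a e^{it\Delta} v_0|^2\,dz\,dt + \langle u_0, e^{iT\Delta}v_0\rangle$ (adjusting signs/conjugates as needed), defined on $v_0 \in L^2(\TT^2)$. The observability estimate~\eqref{observation} gives coercivity of the quadratic part, while the linear part is bounded because $\|a e^{it\Delta}v_0\|_{L^2}$ controls $\|v_0\|_{L^2}$ from below; so $J$ is strictly convex, continuous and coercive, hence attains a minimum at some $V_0$. The Euler--Lagrange equation at the minimizer produces the control: writing $v(t) = e^{it\Delta}V_0$, one takes $f = -\overline{a v}$ (or $a$ times the solution, matching the form of~\eqref{eq:con} where the localization is by $a$), and a duality computation against arbitrary test data shows the controlled solution satisfies $u|_{t=T}=0$. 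The regularity bookkeeping is where the care alluded to in the text is needed: because $a \in L^4$ (not $L^\infty$), one checks via Hölder that $a e^{it\Delta}v_0 \in L^{4/3}(\TT^2; L^2(0,T))$ — using $af \in L^{4/3}$ when $f \in L^4$ — so that Theorem~\ref{th.BBZ} applies and gives a genuine solution $u \in C^0([0,T];L^2)\cap L^4(\TT^2;L^2(0,T))$ with the right bounds. The two variants \eqref{i} and \eqref{ii} differ only in which of $a$, $f$ is taken in $L^4$ versus $L^2$; in \eqref{i} one writes $a \in L^2$ and $f \in L^4$ so $af \in L^{4/3}$, in \eqref{ii} one writes $a \in L^4$ and $f \in L^2$ so again $af \in L^{4/3}$, and Theorem~\ref{th.BBZ} covers both. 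Correspondingly the observability estimate needed is \eqref{observation} with $a \in L^4$, which is exactly Theorem~\ref{th.1}.

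For the reverse implication, controllability $\Rightarrow$ observability, I would argue by the usual open-mapping/closed-range duality: the control-to-state map being surjective onto the target (reaching $0$ from any $u_0$) is equivalent, by functional analysis, to the adjoint observation map being bounded below, which is precisely~\eqref{observation}. Concretely, if \eqref{observation} failed there would be $v_0^{(n)}$ with $\|v_0^{(n)}\|_{L^2}=1$ and $\|a e^{it\Delta}v_0^{(n)}\|_{L^2}\to 0$; pairing with the controlled trajectories and using that $u|_{t=T}=0$ for every $u_0$ yields a uniform bound on $\langle u_0, e^{iT\Delta}v_0^{(n)}\rangle$ for all $u_0$, forcing $e^{iT\Delta}v_0^{(n)} \to 0$ weakly hence $v_0^{(n)}\rightharpoonup 0$, and then one extracts a contradiction with the normalization by the same weak-to-strong compactness argument used in the proof of Theorem~\ref{th.1} (distributional convergence plus Theorem~\ref{th.BBZ} plus the fact that $N=\{0\}$ from~\eqref{eq:N0}).

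The main obstacle is not the abstract HUM scheme but the low regularity of $a$ and $f$: one must verify at each step that all products and pairings make sense and that Theorem~\ref{th.BBZ} is applicable, in particular that $a e^{it\Delta}v_0 \in L^{4/3}(\TT^2;L^2(0,T))$ and that the minimizer of $J$ yields a control in the claimed space ($L^4(\TT^2;L^2(0,T))$ in case~\eqref{i}, $L^2((0,T)\times\TT^2)$ in case~\eqref{ii}). A secondary subtlety is the non-self-adjointness introduced in Theorem~\ref{th.3} by the $+ia$ damping term, but within Proposition~\ref{p:hum} itself only the conservative flow $e^{it\Delta}$ appears, so this is deferred; the passage from~\eqref{iii} to Theorem~\ref{th.3} is the Lyapunov/resolvent argument carried out separately in~\S\ref{HUM}.
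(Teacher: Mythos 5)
Your high-level scheme (HUM duality with the observation map, plus Theorem \ref{th.BBZ} for the low-regularity bookkeeping) is the paper's scheme, but two steps as you describe them do not close. First, in the direction controllability $\Rightarrow$ observability your ``concrete'' argument has a genuine gap: from (\ref{ii}) you only get, for each fixed $u_0$, some control $f_{u_0}$, and the duality identity \eqref{eq:SR} then gives $\la u_0, v_0^{(n)}\ra = i\,( f_{u_0}, S v_0^{(n)})\to 0$, i.e.\ merely weak convergence $v_0^{(n)}\rightharpoonup 0$. That is not in contradiction with $\|v_0^{(n)}\|_{L^2}=1$: there is no compactness here, and the ``weak-to-strong'' endgame you import from the proof of Theorem \ref{th.1} is powered by Proposition \ref{p:weak} (the observability with $H^{-2}$ error term), which is neither a consequence of hypothesis (\ref{ii}) nor part of this purely functional-analytic equivalence; invoking it would also make the equivalence depend on the hard semiclassical input instead of on (\ref{ii}). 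What is needed is the quantitative version of the principle you name but do not use: by the closed graph/open mapping theorem, surjectivity of $R$ gives $\eta>0$ such that every $v_0$ is $Rf$ for some $f$ with $\|f\|_{L^2((0,T)\times\T^2)}\le \eta^{-1}\|v_0\|_{L^2}$, and then \eqref{eq:SR} yields $\|v_0\|_{L^2}^2 = i(f,Sv_0)\le \eta^{-1}\|v_0\|_{L^2}\,\|Sv_0\|_{L^2}$, which is \eqref{observation} directly, with no limiting argument and no use of \eqref{eq:N0}.

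Second, the bridge between item (\ref{i}) (localization $a\in L^2$, control in $L^4(\T^2;L^2(0,T))$) and items (\ref{ii})--(\ref{iii}) is not actually addressed. You assert the two controllability variants ``differ only in H\"older bookkeeping'' and that the observability needed is exactly \eqref{observation}; but for (\ref{i}) the weight is only in $L^2$, the natural duality pairs controls in $L^4(\T^2;L^2(0,T))$ with observations $a e^{it\Delta}v_0\in L^{4/3}(\T^2;L^2(0,T))$, and \eqref{observation} (an $L^2$ estimate with an $L^4$ weight) does not by itself give coercivity of the corresponding HUM functional, so your minimization scheme does not produce (\ref{i}) as described. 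The missing device is the substitution $a\leftrightarrow a^2$: for (\ref{i})$\Rightarrow$(\ref{ii}) apply (\ref{i}) to $a^2\in L^2$ and set $f=ag\in L^2$; for the converse direction use that the HUM control associated with the weight $a^{1/2}\in L^4$ (when $a\ge 0$) can be taken in the range of $S$, i.e.\ $g=a^{1/2}e^{it\Delta}w_0$, so that $a\,e^{it\Delta}w_0$ drives the state to rest and $f:=e^{it\Delta}w_0$ lies in $L^4(\T^2;L^2(0,T))$ by Theorem \ref{th.BBZ}, with a phase (sign) factor handling real sign-changing $a$. Without these steps the equivalence with (\ref{i}) is not established. (Your remark that Theorem \ref{th.1} gives (\ref{iii}) ``outright'' is true but orthogonal: the proposition is the soft equivalence that converts Theorem \ref{th.1} into Theorems \ref{th.2} and \ref{th.3}, so its proof should not, and need not, invoke that theorem.)
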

\begin{proof}
Let us prove that ~(\ref{i}) implies~(\ref{ii}). Indeed, for $a\in L^4$, we can apply (\ref{i}) to $a^2\in L^2 $ and get a function $g \in L^4( \T^2 ; L^2(0,T))$ such that $a^2 g$ drives the system to rest, and (\ref{ii}) follows by defining $f= ag \in L^2( \T^2; L^2( 0,T)).$

To prove that (\ref{ii}) and (\ref{iii}) are equivalent, we follow the HUM method. Define the map 
$$ R: f \in L^2((0,T) \times \T^2) \mapsto R f = u |_{t=0},$$
where $u$ is the solution of the final value problem
$$  (i \partial_t + \Delta  ) u ( z) = a(x) 1_{(0,T)}f \in L^{4/3}( \T^2; L^2(0,T)), \quad u |_{T=0} =0.$$
By Theorem \ref{th.BBZ} $ R :  L^{4/3}( \T^2; L^2(0,T)) \to 
L^2 ( \TT^2 ) $ and  
\begin{equation}
\label{eq:2} 
{\text{(\ref{ii})}} \ \ \Longleftrightarrow \ \ 
R ( L^{4/3}( \T^2; L^2(0,T)) ) = L^2 ( \T^2 ) .
\end{equation}

Again by Theorem \ref{th.BBZ}, $e^{it \Delta} v_0 \in 
L^4( \T^2; L^2(0,T)) $ for $ v_0 \in L^2 ( \TT^2 ) $, we define 
$$S: v_0 \in L^2( \T^2) \mapsto 1_{(0,T)} \times a e^{it \Delta} v_0 \in L^2((0,T) \times \T^2),$$
and
\begin{equation}
\label{eq:3} 
{\text{(\ref{iii})}} \ \ \Longleftrightarrow \ \ 
\exists \, K \ \forall\, v_0 \in L^2 ( \TT^2) \ \
\| v_0 \|_{L^2 ( \TT^2 ) } \leq K\| S v_0 \|_{L^2 ( ( 0 , T ) \times \T^2 )}.\end{equation}
To relate $ R $ and $ S $ we integrate by parts: 
\[ \begin{split}
\int_0^T \int_{\T^2} af \overline {v} dx dt  &= \int_0^T \int_{\T^2} (i \partial_t + \Delta) u \overline {v} dx dt
%\\ & 
=  i \left[ \int_{\T^2} u \overline{v} dx \right] _0 ^T +\int_0^T \int_{\T^2}  u \overline {(i \partial_t + \Delta)v} dx dt
\\ & = -i \int_{\T^2} u \overline{v} dx|_{t=0} ,
\end{split}\]
which is the same as
\begin{equation}
\label{eq:SR} \bigl( f, Sv_0\bigr)_{L^2((0,T)\times \T^2)} = -i \bigl( Rf, v_0\bigr)_{L^2( \T^2)}.
\end{equation}

 Let us assume (\ref{ii}).  By \eqref{eq:2} and the closed graph theorem there exists $\eta>0$ such that the image of the unit ball in $L^2((0,T) \times \T^2)$ by $R$ contains the ball $ \{ v_0 \in L^2 ( \T^2 ) : 
 \| v_0 \|_{L^2} \leq \eta \} $. Hence for all $ v_0 \in L^2 ( \T^2 ) $
 there exists $ f \in L^2((0,T) \times \T^2)$, such that 
 \[   \| f \|_{ L^2 ((0,T) \times \T^2) } \leq \frac 1 \eta \| v_0\|_{ L^2}
 , \ \ \ R f = v_0 .\]
Hence, using \eqref{eq:SR}, 
\begin{equation}
\begin{split}
 \| v_0 \| _{L^2( \T^2)}^2 & = i  \bigl( f, Sv_0\bigr)_{L^2((0,T)\times \T^2)} \leq \| f\|_{L^2((0,T)\times \T^2} \| Sv_0 \|_{L^2((0,T)\times \T^2)}
  \\
 & \leq \frac 1 \eta \| Sv_0 \|_{L^2((0,T)\times \T^2)}
  \| v_0 \| _{L^2( \T^2)} ,
\end{split}
\end{equation}
and by \eqref{eq:3},  {(\ref{iii})} follows.

On the other, assume that {(\ref{iii})} holds. By \eqref{eq:3}, 
the operator 
$$-i R \circ S : L^2( \T^2) \rightarrow L^2( \T^2)$$ is continuous and, 
by \eqref{eq:SR}, 
there exists $ C > 0 $ such that for all $ v_0 \in L^2( \T^2) $, 
$$ \bigl( -i R \circ S v_0, v_0 \bigr) _{L^2( \T^2)} = \bigl( Sv_0, S v_0\bigr)_{L^2( (0,1) \times \T^2) } \geq \frac 1 C \| v_0 \|_{L^2( \T^2)}^2.$$
Consequently $ -i R \circ S  $ is an injective bounded self-adjoint operator,
hence bijective. 
This in turn shows that $R$ is surjective and
in view of \eqref{eq:2}  proves~(\ref{ii}). 

We also deduce that in~(\ref{ii}) we can assume that $f$ is of the form
$ f =  Su_0 = a e^{it \Delta} u_0$, which, changing $a$ to $a^2$ and using that   $e^{it \Delta} u_0 \in L^4(\T^2; L^2(0,1))$ implies ~(\ref{i}) when $a\geq 0$. By changing $f$ by a phase factor gives the general case of
(\ref{i}).
\end{proof}

In view of Theorem \ref{th.1} this proves Theorem \ref{th.2} and provides 
some additional versions of it.
We now turn to the damped Schr\"odinger equation. 

\begin{proof}[Proof of Theorem \ref{th.3}]
For $a  \in L^2$ with $ a \geq 0 $ and 
$H := (- i \Delta +a)$ we have
$$ \bigl( Hu, u\bigr)_{L^2} = \int_{\T^2} a |u|^2 (x) dx \geq 0 , \ \ u \in H^2 ( \T^2) . $$
Hence for $\lambda >0$ the equation 
$(H +  \lambda ) u =f \in L^2(\T^2)$ can be solved with 
$ \| f \|_{ L^2 } \leq \lambda^{-1} \| u \|_{L^2 } $. 
Hille--Yosida theorem then shows that $ H $ defines a strongly 
continuous semigroup
$ [0, \infty ) \ni t \mapsto \exp ( - t H ) $. Furthermore, when $u_0 \in H^2$, 
\[  u( t ) := \exp ( - t H ) u_0 \in C^1( [0 , \infty ) ; L^2( \T^2))\cap C^0( [ 0 , \infty ) ; H^2( \T^2)) .\]
We then check that 
\begin{equation}
\label{damping}
\begin{gathered}
\| u(t) \|^2_{L^2( \T^2)} = \| u_0 \|^2_{L^2( \T^2)}- \int_0 ^t \int_{\T^2} a(x)  |u|^2(s,x) dx ds,\\
u(t) = e^{it \Delta} u_0 + \int_0^t e^{i(t-s)\Delta} (au)(s) ds.
\end{gathered}
\end{equation}
Let $a_j \in C^0 ( \TT^2) $ and 
$  \| a_j -  a\|_{L^2 ( \TT^2)  } \to 0 $, $ j \to \infty $. Using 
the second expression in \eqref{damping},
\[ 
\begin{split}
\| u\|_{L^4(\T^2; L^2(0,T))} & \leq C \| u_0 \|_{L^2( \T^2)} + \| a_j u \|_{L^1((0,T); L^2( \T^2))} + \| (a- a_j)u \|_{L^{4/3}(\T^2; L^2( 0,T))}\\
& \leq C \| u_0 \|_{L^2( \T^2)} +C T\| a_j\|_{L^\infty} \| u\|_{L^{\infty}((0,T); L^2( \T^2))} 
\\ & \ \ \ \ \ \ \ \ \ \ \ \ \
 +C\| a-a_j\|_{L^2( \T^2)} \| u \|_{L^{4}(\T^2; L^2( 0,T))}
\end{split}
\]
Taking $ j$ large enough so that 
$C\| a-a_j\|_{L^2( \T^2)} \leq \frac12$, we get  
$$ \| u\|_{L^4(\T^2; L^2(0,T))} \leq C' \| u_0 \|_{L^2( \T^2)} , \ \
u_0 \in H^2 ( \T^2 ) . $$
Since $ H^2  $ is dense in $L ^2 $, this remains true for initial data
$u_0\in L^2( \T^2)$ and consequently, for $ a \in L^2 $, we get that 
\begin{equation}\label{esti-ap}
 \int_0 ^t \int_{\T^2} a(x)  |u|^2(s,x) dx ds\leq C \| u_0 \|_{L^2( \T^2)}.
 \end{equation}
By simple integration by parts~\eqref{damping} is true  for $u_0\in H^2$, and consequently from~\eqref{esti-ap} it remains true for $u_0\in L^2$.
Now, if for some $ T > 0 $,
\begin{equation} 
\label{eq:u00}
\| u_0 \|_{L^2( \T^2)} \leq C \int_0 ^T \int_{\T^2} a(x)  |u|^2(t,x) dx dt ,
\end{equation} 
where $u$ is the solution of~\eqref{damped},
then \eqref{damping} and semigroup property show that
$  \| u ( k T ) \|_{ L^2 ( \TT^2 )}^2 \leq 
( 1 - 1/C )^N \| u_0 \|_{ L^2 ( \TT^2 ) }^2$, 
and the exponential decay~\eqref{damped} follows.  

For any fixed $T>0$ \eqref{eq:u00} 
is the same as~\eqref{eq:th1} with $ W = a^{\frac12} $, except that here $u$ is the solution of the damped Schr\"odinger equation, while in~\eqref{eq:th1} it is the solution of the free Schr\"odinger equation. 

We now claim that \eqref{eq:th1}, $ W = a^{\frac12} $, implies 
\eqref{eq:u00}. In fact, suppose that \eqref{eq:u00} is not true.
Then, there exists a sequence $ u_{0,n} \in L^2 $, 
\[  \| u_{0,n} \|_{L^2} =1, \  \ \ ( i \partial_t  + \Delta )u_n = a u_n , \ \ u_n|_{t=0} = u_{0,n}, \ \
 \| a^{\frac12} u_n \|_{ L^2 ( ( 0 , T ) \times \TT^2 ) }  \to 0 , \ 
 n\to \infty. \]
Then  
\[ \| a u_n \|_{ L^{\frac43} ( \TT^2; L^2 ( ( 0 , T ) )) } =
\| a^{\frac12} a^{\frac12} u \|_{ L^{\frac43} ( \TT^2; L^2 ( ( 0 , T ) )) }
\leq \| a^{\frac12} \|_{L^4( \TT^2 )  }
\| a^{\frac12} u_n \|_{ L^2 ( ( 0 , T ) \times \TT^2 ) }  \to 0  ,\]
and Theorem \ref{th.BBZ} shows that 
$ u_n = e^{ it \Delta} u_{0,n} + e_n$, 
%\| e_n\|_{ L^\infty ( ( 0, T ) , L^2 ( \TT^2 ) ) } + 
$ \| e_n \|_{ L^4 ( \TT^2 ; L^2 ( ( 0 , T ))) } \to 0 $. 
But then, using \eqref{eq:th1},
\[ \begin{split} 0 & = \limsup_{n \to \infty } \| a^{\frac12} u_n \|_{L^2 ( ( 0, T ) \times \TT^2 ) } 
\\ & 
\geq \limsup_{n \to \infty }  \left( \| a^{\frac12} e^{ it \Delta }
u_{ 0,n} \|_{ L^2 ( ( 0, T ) \times \TT^2 ) } - \| a^{\frac12} \|_{L^4 }
\| e_n \|_{ L^4 ( \TT^2 ; L^2 ( ( 0 , T ))) } \right) \\
& \geq
\limsup_{n \to \infty } \| a^{\frac12} e^{ it \Delta }
u_{ 0,n} \|_{ L^2 ( ( 0, T ) \times \TT^2 ) } \geq
c
\limsup_{n\to \infty } \| u_{0,n}\|_{L^2 ( \TT)} 
%\\ & 
= c > 0 \end{split}\]
which gives a contradition. Hence \eqref{eq:u00} holds and that completes the proof.
\end{proof}

\section*{Appendix}

\refstepcounter{section}
\renewcommand{\thesection}{A}

To see that Theorem \ref{th.1} for $ T > \pi $ and rational tori 
follows from \cite[Theorem 1.2]{Jak}
assume that $ \TT^2 = 
(\RR/2\pi \ZZ )^2 $. We then write $ u ( z ) = \sum_{ \lambda  } u_\lambda $, where
the sum of is over distinct eigenvalues of $ - \Delta $ (and $u_\lambda$ is the projection of $u$ on the corresponding eigenspace).
By Ingham's inequality \cite{ing}
(this is where $ T > \pi $ is used),
\[ \int_0^T \| W e^{ it \Delta} u \|^2_{ L^2 ( \TT^2 )}  =
\int_{\TT^2 } \int_0^T \left| \sum_{ \lambda \in \NN } W(z) u_\lambda (z)   
e^{i t \lambda } \right|^2 dt dz \geq B 
\int_{\TT^2} \sum_{ \lambda \in \NN }  | W ( z ) u_\lambda ( z ) |^2 dz.\]
Hence, \eqref{eq:th1} follows from the estimate,
\begin{equation}
\label{eq:Jak}  \sum_{ \lambda } \| u_\lambda\|^2_{L^2 ( \TT^2 ) } \leq C \int_{\TT^2} \sum_{ \lambda \in \NN }  | W ( z ) u_\lambda ( z ) |^2 dz, \end{equation}
which it turn follows from a pointwise estimate: 
\begin{equation}
\label{eq:Jak1} 
  \| u_\lambda\|^2_{L^2 ( \TT^2 ) } \leq C \int_{\TT^2} | W ( z ) u_\lambda ( z ) |^2 dz, \ \ \ 
 - \Delta u_\lambda = \lambda u_\lambda . \end{equation}

\begin{proof}[Proof of \eqref{eq:Jak1}] 
We start with the observation that the zero set of a non-trivial trigonometric polynomial $ p ( z ) $ has 
measure zero and hence,
\begin{equation}
\label{eq:trig}
\int_{\TT^2} | W ( z ) p( z ) |^2 dz > 0 .
\end{equation} 
In particular that holds for any fixed eigenfunction of $ - \Delta $.

To prove \eqref{eq:Jak1} we proceed by contradiction, that is we assume
that there exists a sequence of $ e_n $'s, such that
\begin{equation} 
\label{eq:en} \|e_n \|_{L^2}^2 = 1 , \ \ \ \| W e_n \|_{L^2}^2 \rightarrow 0
, \ \  - \Delta e_n = 
\lambda_n e_n .  \end{equation}

Suppose first that $ \lambda_n $ are bounded. We can then assume that
$ \lambda_n \to \lambda $. From \eqref{eq:en} we see that 
$ e_n $ are bounded in $ H^2 $ and hence we can assume that 
$ e_n \to e $ in $ H^1 $ and, as $ H^1 \subset L^4 $, also in $ L^4 $. Then \eqref{eq:en} shows that $ -\Delta e = \lambda e$, $ \| e\|_{L^2 } = 1 $, $ \| W e \|_{L^2 } = 0$, which contradicts~\eqref{eq:trig}. 

Hence we can assume (by extracting a subsequence) that $ \lambda_n \to \infty $ in \eqref{eq:en}. We can then assume that 
the sequence of probability measures $ |e_n|^2 dx $ converges weakly to 
a measure $ \nu$. 
According to \cite[Theorem 1.2]{Jak}, $ \nu =  p ( z ) dz $ where
$ p$ is a non-negative trigonometric polynomial, $ \int p ( z ) dz = 1 $.

 Let $ f_k\in C^0$, 
$ f_k \geq 0$,  converge to $|W|^2 $ in $L^2$.  From Zygmund's bound on the $L^4$ norm of $e_n$~\eqref{eq:zyg0},  we get 
$$ \limsup_{n\rightarrow + \infty} | \int (f_k - |W |^2)|e_n|^2 (x) dx | \leq C \| f_k - |W|^2\|_{L^2} ,$$
and from the weak convergence 
$ \lim_{n\rightarrow + \infty}  \int f_k|e_n|^2 (x) dx = \int f_k (x) p(x)dx.$ 
We deduce 
$$0= \lim_{n\rightarrow + \infty}  \int |W e_n|^2 (x) dx = \int |W (x)|^2 p(x)dx. $$ This again contradicts \eqref{eq:trig}.
\end{proof}


\begin{thebibliography}{10}

\bibitem[AJM12]{AJM} T. A\"issiou, D. Jakobson and F. Maci\`a. 
\newblock
Uniform estimates for the solutions of the Schrödinger equation on the torus and regularity of semiclassical measures.
\newblock
{\em Math. Res. Lett. }{\bf 19}(2012),  589--599.

\bibitem[AFM15]{AFM} N. Anantharaman, C. Fermanian-Kammerer and F. Maci\`a. \newblock Semiclassical Completely Integrable Systems : Long-Time Dynamics And Observability Via Two-Microlocal Wigner Measures,
{\em    Amer. J. Math. } {\bf 137}(2015), 577--638.

\bibitem[AM14]{AM}
N. Anantharaman and F. Maci\`a, 
\newblock
Semiclassical measures for the Schr\"odinger equation on the torus,
\newblock
{\em J. Eur. Math. Soc.} {\bf 16}(2014), 1253--1288. 



\bibitem[BLR92]{BLR}
C. Bardos, G. Lebeau and J. Rauch.
\newblock 
Sharp sufficient conditions for the observation, control, and stabilization of waves from the boundary.
\newblock {\em SIAM J. Control Optim.} 30:1024--1065, 1992.

\bibitem[BD16]{BD}
J.~Bourgain and S.~Dyatlov.
\newblock 
Spectral gaps without the pressure condition, to appear in Ann.~Math.
\newblock
\arXiv{1612.09040}.


\bibitem[BBZ13]{BBZ}
J.~Bourgain, N.~Burq and M. Zworski,
\newblock{Control for Schrödinger operators on 2-tori: rough potentials.}
\newblock
{\em J. Eur. Math. Soc.} {\bf 15}(2013), 1597--1628.


%\bibitem{BSSY}
%J. Bourgain, P. Shao, C.D. Sogge and X. Yao
%\newblock{On $L^p$-resolvent estimates and the density of eigenvalues for compact Riemannian manifolds}
%\newblock{ preprint arXiv:1204.3927,} 2012



%\bibitem{BuGeTz} 
%{N. Burq, P. G\'erard, and N. Tzvetkov},
%\newblock{An instability property of the nonlinear Schr\"odinger equation on $S^d$}.
%\newblock{\em Math. Res. Lett.}, 9(2-3):323--335, 2002.


%\bibitem{BZ2}
%N.~Burq and M. Zworski.
%\newblock Geometric control in the presence of a black box.
%\newblock {\em Jour. A.M.S.} 17, 2004, no. 2, 443--471.


%\bibitem{BZ3}
%N.~Burq and M. Zworski.
%\newblock Bouncing ball modes and quantum chaos.
%\newblock {\em SIAM Review}, {43--49}, 47, 2005.



%\bibitem[Bu92]{Bu92}
%N.~Burq
%\newblock
%Control for Schrodinger equations on product manifolds,
%\newblock{ \em unpublished}, 1992.

%\bibitem[Bu13]{B}
%N.~Burq.
%\newblock Semi-classical measures for inhomogeneous Schr\"odinger
%equations on tori,
%\newblock
%{\em Analysis \& PDE},  {\bf 6}(2013), 1421--1427. 


\bibitem[Bu17]{Bu17} N.~Burq, Wave control and second-microlocalization on geodesics, in preparation.


\bibitem[BG96]{BG}
         {N. Burq and P. G{\'e}rard},
\newblock {Condition n{\'e}cessaire et suffisante pour la
       contr{\^o}labilit{\'e} exacte des ondes},
\newblock{\em Comptes Rendus de L'Acad{{\'e}}mie des Sciences}, {749--752},{t.325, S{\'e}rie I}, 1996

\bibitem[BG17]{Bu16} N.~Burq and P.~G\'erard, Stabilization of wave equations by rough damping.



%\bibitem[BZ03]{BZ31}
%N.~Burq and M. Zworski.
%\newblock Eigenfunctions for partially rectangular billiards.
%\newblock{\tt arXiv:math/0312098}, unpublished, 2003.


\bibitem[BZ12]{BZ4}
N.~Burq and M. Zworski.
\newblock Control for Schr\"odinger equations on tori
\newblock {\em Math. Research Letters} 19: 309-324, 2012.


%\bibitem{BS}
%A. B\"acker, R. Schubert, and P. Stifter. 
%\newblock On the number of bouncing ball modes in billiards.
%\newblock {\em J. Phys. A: Math. Gen.} 30:6783-6795, 1997.

%\bibitem{Co} 
%A. C\'ordoba, 
%\newblock Geometric Fourier analysis, 
%\newblock {Annales de l'institut Fourier}, 32:215--226, 1982.


%\bibitem{DiSj}
%M. Dimassi and J. Sjšstrand
%\newblock Spectral asymptotics in the semiclassical limit.
%{\it  London Mathematical Society Lecture Note Series}, 268. Cambridge University Press, Cambridge, 1999. xii+227 pp. 

%\bibitem{KDSS}
%D. Dos Santos, C. Kenig and M. Salo
%\newblock{On Lp resolvent estimates for Laplace-Beltrami operators on compact manifolds}
%\newblock{preprint, arXiv:1112.3216, 31 pages, to appear in Forum mathematicum,} 2011.


\bibitem[DJ17]{DJ}
S.~Dyatlov and L.~Jin, 
\newblock Semiclassical measures on hyperbolic surfaces have full support, preprint. 
\newblock \arXiv{1705.05019}.



\bibitem[Ha89]{Ha}
A. Haraux.
\newblock S\'eries lacunaires et contr\^ole semi-interne des vibrations
              d'une plaque rectangulaire,
\newblock{\em J. Math. Pures Appl.} 68-4:457--465, 1989.

\bibitem[In36]{ing} A.E.~Ingham,
\newblock Some trigonometrical inequalities with applications to the theory of       
series.
\newblock {\em Math. Z.} {\bf 41}(1936), 367--379.

\bibitem[Ja90]{Ja}
S. Jaffard.
\newblock Contr\^ole interne exact des vibrations d'une plaque rectangulaire. 
\newblock {\em Portugal. Math.} 47 (1990), no. 4, 423-429.

\bibitem[Ja97]{Jak}
D.~Jakobson, 
\newblock
Quantum limits on flat tori. 
{\em Ann. of Math.} {\bf 145}(1997), 235--266. 

%\bibitem{JK} 
% D. Jerison and C.E. Kenig
%\newblock Unique continuation and absence of positive eigenvalues for
%Schr\"odinger operators, 
%\newblock {\em Ann. Math.} 121(3),  463-488, 1985.

\bibitem[Ji17]{Ji}
L.~Jin, 
\newblock Control for Schrödinger equation on hyperbolic surfaces, preprint. 
\newblock \arXiv{1707.04990}.

\bibitem[Ko92]{Ko}
V. Komornik
\newblock On the exact internal controllability of a Petrowsky system.
\newblock{ \em J. Math. Pures Appl.} (9) 71 (1992), no. 4, 331--342. 

%\bibitem[Ka62]{Ka}
%J.P. Kahane.
%\newblock
%Pseudo-p\'eriodicit\'e et s\'eries de Fourier lacunaires
%\newblock 
%{\em Ann. Sci.  \'Ecole Norm. Sup.} 79 (1962), no.3,  93--150.

\bibitem[Le92]{Le}
G. Lebeau
\newblock Contr\^ole de l'\'equation de Schr\"odinger
\newblock{\em J. Math. Pures Appl.} (9) 71, no. 3, 267--291, 1992.

\bibitem[Li88]{Li}
J.L. Lions.
\newblock {\em Contr{\^o}labilit{\'e} exacte. Perturbation et stabilisation des
  syst{\`e}mes distribu{\'e}s}, volume~23 of {\em R.M.A.}
\newblock Masson, 1988.

\bibitem[Ma09]{Ma}
F. Maci\`a. 
\newblock Semiclassical measures and the Schrödinger flow on Riemannian manifolds. 
\newblock
Nonlinearity {\bf 22}(2009) , 1003--1020.


%\bibitem{Mi}
%L. Miller
%\newblock{Controllability cost of conservative systems: resolvent condition and transmutation}
%\newblock{ \em J.  Funct. Anal.} 218, 2, 425-444, 2005.
%\bibitem{SO88}
%C.~Sogge.
%\newblock Concerning the $L^p$ norm of spectral clusters for second order
%  elliptic operators on compact manifolds.
%\newblock {\em Jour. of Funct. Anal.}, 77:123--138, 1988.
%\bibitem{SS}
%M. Schechter and B. Simon, 
%\newblock Unique continuation for Schr\"odinger operators with
%unbounded potential,
%\newblock {\em J. Math. Anal. Appl.} 77, 482-492, 1980.


\bibitem[Zw12]{EZB}
M. Zworski.
\newblock {\em Semiclassical analysis}, 
\newblock {\bf 138} {\em Graduate Studies in Mathematics}, AMS 2012.

\end{thebibliography}
\end{document}